\DeclareFontFamily{OT1}{rsfs}{}
\DeclareFontShape{OT1}{rsfs}{n}{it}{<-> rsfs10}{}
\DeclareMathAlphabet{\mathscr}{OT1}{rsfs}{n}{it}
\theoremstyle{plain}
\numberwithin{equation}{section}
\newtheorem{theorem}{Theorem}[section]
\newtheorem{proposition}[theorem]{Proposition}
\newtheorem{lemma}{Lemma}[section]
\theoremstyle{definition}
\newtheorem{definition}[theorem]{Definition}
\def\<{\langle} \def\>{\rangle}
\newcommand{\Bu}{{\boldsymbol{u}}}
\newcommand{\Be}{{\boldsymbol{e}}}
\newcommand{\Bx}{\boldsymbol{x}}
\def\dd{\mathop{}\!\mathrm{d}}
\newcommand{\ignore}[1]{}
\newcommand{\stkout}[1]{\ifmmode\text{\sout{\ensuremath{#1}}}\else\sout{#1}\fi}
 \let\oldequation\equation
 \let\oldendequation\endequation
 \renewenvironment{equation}
 {\linenomathNonumbers\oldequation}
 {\oldendequation\endlinenomath}
 \let\oldalign\align
 \let\oldendalign\endalign
 \let\oldgather \gather
 \let\oldendgather\endgather
\title[Convection in multilayer porous media]{Vanishing layer thickness limit of convection in multilayer porous media}
\author{Kaijian Sha$^{{\href{mailto:kjsha11@eitech.edu.cn}{\textrm{\Letter}}}1}$}
\author{Xiaoming Wang $^{{\href{mailto:wxm.math@outlook.com}{\textrm{\Letter}}}1,2}$}
\address{$^1$ School of Mathematical Sciences, Eastern Institute of Technology, Ningbo, China}
\address{$^2$ Department of Mathematics and Statistics, Missouri University of Science and Technology, Rolla, MO, USA}
\subjclass{35Q35,  35Q86, 76D03, 76S99, 76R99}
\keywords{multilayer porous media, convection, Darcy-Boussinesq, vanishing thickness limit, global attractor}
\thanks{$^*$Corresponding author: Xiaoming Wang}
\thanks{This work is supported by NSFC grant 12271237 }
\begin{document}

%\linenumbers

\begin{abstract} 
Within the Darcy-Boussinesq framework for convection in multilayered porous media, we investigate the singular limit in which the thickness of one layer tends to zero. We establish that the solution of the full system converges to that of the corresponding limiting model with one fewer layer. The convergence is established in two complementary senses:  (i) strong $L^{2}$-convergence over arbitrary finite time intervals, and (ii) upper semi-continuity of the global attractors describing the large-time asymptotic behavior.
\end{abstract}
 \maketitle 
%\tableofcontents 

\section{Introduction}
Convection in layered porous media arises in a wide range of natural and engineering contexts, including geophysical and industrial processes \cite{bickle2007modelling, hewitt2022jfm, hewitt2014jfm, hewitt2020jfm, huppert2014ar,  mckibbin1980jfm, mckibbin1981heat, mckibbin1983thermal, sahu2017tansp, salibindla2018jfm, wooding1997convection}. A standard approach is to model each layer as a porous medium governed by Darcy's law with buoyancy, coupled to the advection-diffusion equation for solute or contaminant transport. Interfaces between layers are customarily treated as sharp, giving rise to the so-called sharp interface model. The well-posedness of this model in multilayer settings has recently been established in \cite{CNW2025}, and the interfacial boundary conditions have been rigorously justified via the sharp material interface limit \cite{DW2025}.

In many applications, however, one of the layers may be significantly thinner than the others. From a heuristic perspective, one might then neglect the thin layer and arrive at a reduced model with one fewer layer, while leaving all other aspects unchanged. The main contribution of the present work is to rigorously justify this reduction. Specifically, we prove that the solution of the full multilayer system converges to that of the corresponding limiting model with one fewer layer, both in terms of $L^{2}$-convergence over arbitrary finite time intervals and in terms of the long-time dynamics as characterized by the convergence of global attractors. The key in our analysis is uniform (in the thickness of the thin layer) estimates of our solutions, and uniform in time estimates of the solutions in the long-time behavior case.

The study of effective equations in the presence of thin layers has a long history, beginning with the pioneering work of Keller when he derived the effective conductivity of composite material. A more modern and comprehensive treatment can be found in the review work \cite{AV2018}. What we have treated here is a relatively simple case of vanishing thickness instead of simultaneous vanishing thickness and permeability, but in a nonlinear setting as opposed to linear problems in most literatures. 

The remainder of the paper is organized as follows. Section 2 recalls the general multilayer model. In Section 3 we establish preliminary estimates. Section 4 is devoted to uniform (in thickness) estimates in the presence of a thin layer, together with uniform long-time bounds for the solutions. In Section 5 we demonstrate the convergence to the limiting model.

% The title of section 2: 
\section{The mathematical model}
For convection in layered domain, for example $\Omega = (0,L)\times(-H,0)$, the governing equations are the following system \cite{NB2017},
\begin{equation}\label{Sharp}
\left\{\begin{aligned}
  &\Bu=-\frac{K}{\mu}\left(\nabla P + \rho_{0}(1+\alpha\phi)g\Be_{z}\right),\\
  &\nabla \cdot \Bu =0,\\
  &b\partial_t \phi  + \Bu\cdot\nabla\phi - \nabla\cdot(bD\nabla\phi)=0,
\end{aligned}\right.
\end{equation}
Here $\Bu$, $\phi$, and $P$ are the unknown fluid velocity, concentration, and pressure, respectively; $\rho_{0}$, $\alpha$, $\mu$, $g$ are the constant reference fluid density, constant expansion coefficient, constant dynamic viscosity, and the gravity acceleration constant, respectively; and $\Be_{z}$ stands for the unit vector in the vertical-direction. In addition, $K$, $b$, $D$ represent the permeability, porosity, and diffusivity coefficients respectively. 

For the idealized sharp interface model, the domain $\Omega$ is divided into $l$ layers by $l-1$ interfaces located at $z= z_i \in (-H,0), i = 1,2,\cdots,l-1$. The $i$-th layer $\Omega_i$ is denoted by 
\[\Omega_i: = \{\Bx= (x,z)\in \Omega:~z\in (z_{i-1},z_i)\},~~~~\text{ for }i=1,2,\cdots,l.\]

In each layer $\Omega_i$, the permeability, porosity, and diffusivity coefficients are assumed to be constant. Namely,
\[
K=K(\boldsymbol{x})=K_i, \quad b=b(\boldsymbol{x})=b_i, \quad D=D(\boldsymbol{x})=D_i, \quad \boldsymbol{x}\in\Omega_i, \quad 1\leq i\leq l,
\]
for a set of constants $\{K_i,b_i,D_i\}_{i=1}^{l}$.

\begin{figure}[htbp]
    \centering

\tikzset{every picture/.style={line width=0.75pt}} %set default line width to 0.75pt        

\begin{tikzpicture}[x=0.75pt,y=0.75pt,yscale=-1,xscale=1]
%uncomment if require: \path (0,300); %set diagram left start at 0, and has height of 300

%Straight Lines [id:da6135873122990148] 
\draw    (230,66) -- (400,66) ;
%Straight Lines [id:da641280334322462] 
\draw    (230,238) -- (400,238) ;
%Straight Lines [id:da2643712137709442] 
\draw  [dash pattern={on 0.84pt off 2.51pt}]  (230,66) -- (230,238) ;
%Straight Lines [id:da4832788241319341] 
\draw  [dash pattern={on 0.84pt off 2.51pt}]  (400,66) -- (400,238) ;
%Straight Lines [id:da18196643446705674] 
\draw  [dash pattern={on 3.75pt off 3pt on 7.5pt off 1.5pt}]  (230,151) -- (400,151) ;
%Straight Lines [id:da6233992328530896] 
\draw  [dash pattern={on 3.75pt off 3pt on 7.5pt off 1.5pt}]  (229.5,171) -- (399.5,171) ;
%Straight Lines [id:da37864337206122467] 
\draw  [dash pattern={on 3.75pt off 3pt on 7.5pt off 1.5pt}]  (230,102) -- (400,102) ;
%Shape: Brace [id:dp45428938195597734] 
\draw   (228.8,151.6) .. controls (226.27,151.56) and (224.99,152.8) .. (224.95,155.33) -- (224.95,155.33) .. controls (224.89,158.94) and (223.6,160.72) .. (221.07,160.68) .. controls (223.6,160.72) and (224.83,162.54) .. (224.77,166.15)(224.8,164.53) -- (224.77,166.15) .. controls (224.73,168.68) and (225.97,169.96) .. (228.5,170) ;

% Text Node
\draw (409.33,60.4) node [anchor=north west][inner sep=0.75pt]  [font=\small]  {$z_{0} =0$};
% Text Node
\draw (409.67,228.07) node [anchor=north west][inner sep=0.75pt]  [font=\small]  {$z_{l} =-H$};
% Text Node
\draw (408.33,147) node [anchor=north west][inner sep=0.75pt]  [font=\small]  {$z_{j-1}$};
% Text Node
\draw (410.33,165) node [anchor=north west][inner sep=0.75pt]  [font=\small]  {$z_{j}$};
% Text Node
\draw (410.67,97) node [anchor=north west][inner sep=0.75pt]  [font=\small]  {$z_{1}$};
% Text Node
\draw (308.67,83) node [anchor=north west][inner sep=0.75pt]  [font=\small]  {$\Omega _{1}$};
% Text Node
\draw (311.33,155) node [anchor=north west][inner sep=0.75pt]  [font=\small]  {$\Omega _{j}$};
% Text Node
\draw (204.25,153.57) node [anchor=north west][inner sep=0.75pt]  [font=\small]  {$h_{j}$};
% Text Node
\draw (405.5,192.65) node [anchor=north west][inner sep=0.75pt]  [font=\small]  {$\cdots $};
% Text Node
\draw (405,118.65) node [anchor=north west][inner sep=0.75pt]  [font=\small]  {$\cdots $};
% Text Node
\draw (307.5,118.65) node [anchor=north west][inner sep=0.75pt]  [font=\small]  {$\cdots $};
% Text Node
\draw (308.5,198.65) node [anchor=north west][inner sep=0.75pt]  [font=\small]  {$\cdots $};

\end{tikzpicture}

    \caption{Multilayer porous media}
    \label{fig1}
\end{figure}
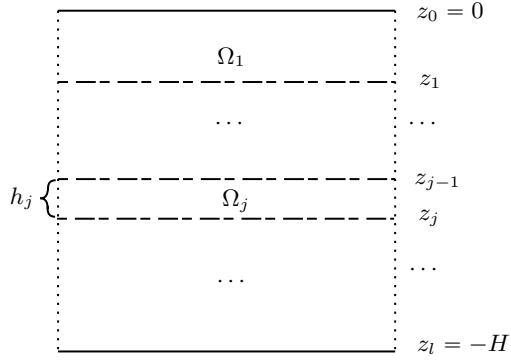

On the interfaces $z=z_i$, we assume the continuity of the normal velocity, the concentration density, and the pressure, i.e., 
\begin{equation}\label{interface-1}
  \Bu\cdot\Be_{z},\ \phi,\ P \text{ are continuous at } z=z_i,\ 1\leq  i\leq l-1.
\end{equation} 

Furthermore, system \eqref{Sharp} is supplemented with the initial condition
\begin{equation}\label{initialdata}
  \phi|_{t=0}=\phi_{0}
\end{equation}
and the boundary conditions
\begin{equation}\label{bc0}
\phi|_{z=0}=c_0, \quad \phi|_{z=-H}=c_{-H},~~\Bu\cdot \Be_z|_{z=0,-H}= 0,
\end{equation}
together with periodicity in the horizontal direction $x$.

The well-posedness of sharp interface model \eqref{Sharp}-\eqref{bc0} for $L^2$ initial data in 2D and $H^1$ initial data in 3D has been studied in \cite{CNW2025}. 

\section{Preliminary results}
Following the work of \cite{doering1998jfm}, we introduce a background profile to overcome the nonhomogeneous boundary condition of $\phi$ in \eqref{bc0}. More specifically,  we introduce a smooth function $\phi_b(z;\delta)$ on $[-H,0]$ satisfying
\begin{equation}\label{phi_b1}
\phi_b(z;\delta)= \left\{\begin{aligned}
& c_0, && z = 0,\\
& \frac{c_0+c_{-H}}{2}&&z \in (-H+\delta,-\delta),\\
& c_{-H}, && z=-H
\end{aligned}\right.
\end{equation}
and 
\begin{equation}\label{phi_b2}
|\phi_b'| \leq \frac{c_\Delta}{\delta}, ~~~~|\phi_b''| \leq \frac{2c_\Delta}{\delta^2},
\end{equation}
where $\delta>0$ is a parameter to be determined later and $c_\Delta := |c_0-c_{-H}|$. 

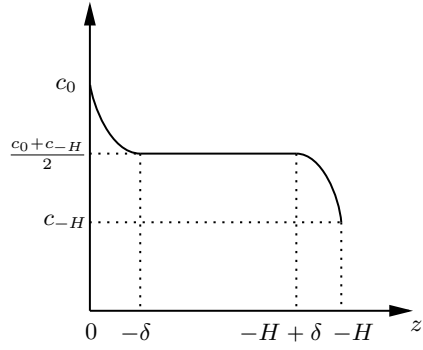
\begin{figure}[htbp]
    \centering

\tikzset{every picture/.style={line width=0.75pt}} %set default line width to 0.75pt        

\begin{tikzpicture}[x=0.75pt,y=0.75pt,yscale=-1,xscale=1]
%uncomment if require: \path (0,300); %set diagram left start at 0, and has height of 300

%Straight Lines [id:da3815787109734916] 
\draw    (248.01,203) -- (409,203) ;
\draw [shift={(411,203)}, rotate = 180] [fill={rgb, 255:red, 0; green, 0; blue, 0 }  ][line width=0.08]  [draw opacity=0] (12,-3) -- (0,0) -- (12,3) -- cycle    ;
%Straight Lines [id:da04531581278878771] 
\draw    (248.01,203) -- (248.01,49) ;
\draw [shift={(248.01,47)}, rotate = 90] [fill={rgb, 255:red, 0; green, 0; blue, 0 }  ][line width=0.08]  [draw opacity=0] (12,-3) -- (0,0) -- (12,3) -- cycle    ;
%Straight Lines [id:da16270497607508194] 
\draw  [dash pattern={on 0.84pt off 2.51pt}]  (273.4,123.67) -- (273.4,203) ;
%Curve Lines [id:da4449994954442509] 
\draw    (248.13,89) .. controls (249.4,98.4) and (258.28,123.59) .. (273.4,123.67) ;
%Straight Lines [id:da6687210740002881] 
\draw  [dash pattern={on 0.84pt off 2.51pt}]  (374.8,158.33) -- (374.8,203) ;
%Straight Lines [id:da13932931117831482] 
\draw    (352.13,123.67) -- (273.4,123.67) ;
%Curve Lines [id:da769446766179348] 
\draw    (352.13,123.67) .. controls (365.09,123.64) and (373.8,145.6) .. (374.8,158.33) ;
%Straight Lines [id:da1062852127444176] 
\draw  [dash pattern={on 0.84pt off 2.51pt}]  (352.13,123.67) -- (352.13,203) ;
%Straight Lines [id:da25361938734326184] 
\draw  [dash pattern={on 0.84pt off 2.51pt}]  (247.8,158.33) -- (374.8,158.33) ;
%Straight Lines [id:da9389046037187144] 
\draw  [dash pattern={on 0.84pt off 2.51pt}]  (247.8,123.67) -- (273.4,123.67) ;

% Text Node
\draw (262,208.) node [anchor=north west][inner sep=0.75pt]  [font=\small]  {$-\delta $};
% Text Node
\draw (322,208) node [anchor=north west][inner sep=0.75pt]  [font=\small]  {$-H+\delta $};
% Text Node
\draw (229.13,84.4) node [anchor=north west][inner sep=0.75pt]  [font=\small]  {$c_{0}$};
% Text Node
\draw (223.13,153) node [anchor=north west][inner sep=0.75pt]  [font=\small]  {$c_{-H}$};
% Text Node
\draw (205,114) node [anchor=north west][inner sep=0.75pt]  [font=\small]  {$\frac{c_{0}+ c_{-H}}{2}$};
% Text Node
\draw (244.13,208.13) node [anchor=north west][inner sep=0.75pt]  [font=\small]  {$0$};
% Text Node
\draw (369,208) node [anchor=north west][inner sep=0.75pt]  [font=\small]  {$-H$};
% Text Node
\draw (408,206.4) node [anchor=north west][inner sep=0.75pt]  [font=\small]  {$z$};

\end{tikzpicture}

    \caption{The background profile $\phi_b$}
    \label{fig2}
\end{figure}
Setting
\[ \psi =\phi -  \phi_b  
\text{ and }  p = P -\rho_0 g z - \alpha \rho_0 \int_{-H}^z\phi_b(s)\dd s,\]
the system \eqref{Sharp} can be rewritten as
    \begin{equation}\label{Sharp-1}
      \left\{\begin{aligned}
        &\Bu=-\frac{K}{\mu}\left(\nabla p+  \alpha  \rho_0g \psi\Be_{z}\right),\\
        &\nabla \cdot \Bu =0,\\
        &b\partial_t \psi  + \Bu\cdot\nabla \psi   +  \phi_b'\Bu \cdot \Be_z   - \nabla\cdot(bD\nabla\psi)=bD \phi_b''.
      \end{aligned}\right. 
      \end{equation} 
For convenience, we will assume $\mu = 1, \rho_{0} \alpha g =1$ and $b=1$ throughout this paper. Then we obtain the simplified sharp interface model 
\begin{equation}\label{Sharps}
      \left\{\begin{aligned}
        &\Bu=- K \left(\nabla p  + \psi \Be_{z}\right),\\
        &\nabla \cdot \Bu =0,\\
        & \partial_t \psi + \Bu\cdot\nabla \psi  +  \phi_b'\Bu \cdot \Be_z   - \nabla\cdot(D\nabla\psi)=D \varphi_b'',
      \end{aligned}\right.  
  \end{equation} 
supplemented with the interfacial boundary conditions 
\begin{equation}\label{interface1}
  \Bu\cdot\Be_{z},\ \psi,\ p\text{ are continuous at } z=z_i,\ 1\leq i\leq l-1,
\end{equation} 
initial condition 
\begin{equation}\label{initial}
  \psi|_{t=0}=\psi_0:=\phi_{0} -\phi_b,
\end{equation}
homogeneous boundary condition 
\begin{equation}\label{bc}
    \psi|_{z=0,z=-H}=0,~~\Bu\cdot \Be_z|_{z=0,-H}= 0,
\end{equation}
and periodicity in the horizontal direction $x$.

While the lower-order terms in the revised  formulation \eqref{Sharps}-\eqref{bc} of the sharp interface model do not affect the well-posedness or finite-time behavior, they introduce additional difficulty in terms of the long-time behavior of the system.  The special background $\varphi_b$, which changes rapidly near the boundary and is constant in the interior of the domain, is chosen so that the linear non-symmetric term $ \phi_b'\Bu \cdot \Be_z$ can be controlled by part of the diffusion term (see Lemma \ref{L^2}).

For $1\leq r \leq \infty$ and $k\in \mathbb{R}$, let $L^r (\Omega)$ and $H^k(\Omega)$ denote the usual Lebesgue space of integrable functions and Sobolev spaces on the domain $\Omega$. The inner product in $L^2(\Omega)$ will be denoted by $(\cdot,\cdot)$. Define
\begin{equation}\nonumber
\mathcal{V}:= \{\psi\in C(\Omega),~\psi|_{\Omega_i} \in C^\infty(\overline{\Omega_i})\text{ for any } i=1,2,\cdots,l:~\psi \text{ satisfies }\eqref{interface1},\eqref{bc}\},
\end{equation} 
and $H, V$ be the closure of $\mathcal{V}$ in the $L^2$ and $H^1$ norm, respectively. We denote the dual space of $V$ by $V^*$. We also define  
\begin{equation}\nonumber
\widetilde{\mathcal{V}} := \{ \Bu \in C(\Omega),~\Bu|_{\Omega_i} \in C^\infty(\overline{\Omega_i})\text{ for any } i=1,2,\cdots,l:~\Bu \text{ satisfies }\eqref{Sharps}_2, \eqref{interface1}, \eqref{bc}\},
\end{equation} 
and $\boldsymbol{H}$ be the closure of $\widetilde{\mathcal{V}}$ in the $L^2$ norm. Then the weak solution of the sharp interface model \eqref{Sharps}-\eqref{bc} is defined as follows. 
\begin{definition}
Let $\psi_0\in  H$ be given, and let $T>0$. A weak solution of the problem \eqref{Sharps}-\eqref{bc} on the interval $[0,T]$ is a triple $(\Bu, \psi, p)$ satisfying the following conditions:
\begin{enumerate}
  \item $\psi \in L^\infty(0,T;H) \cap L^2(0,T;V)$, $\partial_t \psi \in L^2(0,T;V^*)$, 
  \begin{equation}\nonumber
    (\psi(t_2), \varphi) - ( \psi(t_1), \varphi) + \int_{t_1}^{t_2}(\Bu\cdot\nabla \psi, \varphi)  +  (\phi_b'\Bu \cdot \Be_z,\varphi)  + (D\nabla\psi,\nabla\varphi)-(D \varphi_b'',\varphi)\dd t=0,
  \end{equation}
  for any $t_1,t_2\in [0,T]$ and $\varphi\in V$, and $\psi(0)= \psi_0$,
  \item $p \in L^2(0,T;H^1(\Omega))$ is a weak solution to the elliptic problem
  \begin{equation}\label{pressure}
  \left\{\begin{aligned}
&-\nabla\cdot(K\nabla p) = \nabla\cdot (K\psi \Be_{z}) , ~~~~ \text{ in } \Omega,\\
&\partial_z p(x,-H)  = \partial_z p(x,0)  = 0,
  \end{aligned}\right. 
\end{equation}
% that is,
%   \begin{equation}\nonumber
%  (K\nabla p,\nabla q) = (K \psi\Be_z,\nabla q),~~~~ \text{ for any }q\in H^1(\Omega).
%   \end{equation}
  \item $\Bu \in L^2(0,T;\boldsymbol{H}(\Omega))$ satisfies \eqref{Sharps}$_1$.
\end{enumerate}
\end{definition}

The existence of the elliptic problem \eqref{pressure} for any $\psi\in H$ is guaranteed by the Lax-Milgram theorem. Uniqueness can be obtained if we restrict to the mean zero subspace of $H^1$. For piecewise smooth $\psi$, smooth in each layer $\Omega_i$, one can show that $p$ is also piecewise smooth. More specifically, if $\psi\in V$ , then $p$ is piecewise $H^2$. Moreover, one has the following $W^{1,r}$ estimate for the pressure (\cite{DL2021}).
\begin{lemma}\label{estimate-p}
If $\psi \in L^r(\Omega)$ for some $r\in (1,\infty)$, we have $\nabla p\in L^r(\Omega)$ and
\[\|\nabla p\|_{L^r(\Omega)} \leq C_p \|\psi\|_{L^r(\Omega)},\]
where $C_p = C_p(r, K_i,\Omega)$.
\end{lemma}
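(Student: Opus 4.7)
The plan is to treat \eqref{pressure} as a linear elliptic Neumann problem in divergence form, namely $-\nabla\cdot(K\nabla p) = \nabla\cdot \BF$ on $\Omega$ with $\BF := -K\psi\Be_{z}$, periodic in $x$ and $\partial_z p=0$ at $z=0,-H$. Since $K$ is bounded, $\BF\in L^{r}(\Omega)$ with $\|\BF\|_{L^{r}}\leq(\max_i K_i)\,\|\psi\|_{L^{r}}$, so the desired bound $\|\nabla p\|_{L^{r}}\leq C\|\psi\|_{L^{r}}$ reduces to the continuity of the linear solution map $\BF\mapsto\nabla p$ from $L^{r}(\Omega;\R^{2})$ to $L^{r}(\Omega;\R^{2})$.

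The natural first step is the case $r=2$, which I would settle directly via Lax-Milgram: testing the equation against $p$, restricted to the mean-zero subspace of $H^{1}(\Omega)$ so that the solution is unique, gives $\int_{\Omega}K|\nabla p|^{2}\leq \int_{\Omega}|\BF||\nabla p|$, and the uniform ellipticity bound $K\geq \min_i K_i>0$ yields the $L^{2}$ estimate with an explicit constant depending only on $\min_i K_i$ and $\max_i K_i$.

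To extend to a general $r\in(1,\infty)$, I would invoke the $W^{1,r}$-regularity theory for elliptic transmission problems with piecewise constant coefficients separated by flat interfaces, which is precisely what is developed in the cited reference. The geometry of the problem is very favorable in this respect: the set of discontinuity of $K$ consists of finitely many horizontal hyperplanes $\{z=z_i\}$, so locally near each interface one may flatten (or apply an even/odd reflection adapted to the transmission conditions $p$-continuous and $K\partial_z p$-continuous) to reduce to an equation on a flat domain whose principal part has coefficients that are constant on either side of the interface. Standard Calder\'on-Zygmund or $A_{r}$-weighted estimates then yield the $L^{r}$ bound near each interface, which one patches together with interior $W^{1,r}$ estimates away from interfaces and with boundary $W^{1,r}$ estimates for the Neumann condition at $z=0,-H$; the case $r\in(1,2)$ can alternatively be handled by a duality argument against the adjoint problem, which has the same structure.

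The principal obstruction to applying classical Calder\'on-Zygmund theory directly is that $K$ fails to be continuous (or even VMO) across the interfaces, and for merely $L^{\infty}$ coefficients the $W^{1,r}$ estimate is false for $r$ far from $2$ (Meyers' theorem only provides a small open window around $r=2$). The geometric fact that the interfaces are flat and $K$ is constant on each side of them is exactly what restores the full range $r\in(1,\infty)$. Once this regularity is in hand, tracking the dependence of the resulting constant $C_{p}$ on $r$, on the jumps $K_i$, and on the geometry of $\Omega$ is a routine bookkeeping exercise.
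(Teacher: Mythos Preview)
Your proposal is correct and in fact more detailed than what the paper supplies: the paper does not give its own proof of this lemma but merely cites the reference \cite{DL2021} for the $W^{1,r}$ estimate. Your outline---the $r=2$ case by Lax--Milgram, then the extension to general $r\in(1,\infty)$ via the $W^{1,r}$ transmission regularity that holds precisely because the interfaces are flat and $K$ is piecewise constant, with duality for $r<2$---is the standard route to such a result and is consistent with what the cited reference establishes. Your remark that merely $L^{\infty}$ coefficients would only give a Meyers-type window around $r=2$, and that the flat-interface geometry is what restores the full range, correctly identifies the key structural point.
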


For higher regularity of the solutions, we recall the following space, which is introduced in \cite{CNW2025}.
\begin{definition}
Define
\begin{equation}\nonumber
W = \{\psi \in V:~\partial_x \psi \in H^1(\Omega),~D\partial_z \psi \in H^1(\Omega)\}
\end{equation}
endowed with norm
\begin{equation}\nonumber
  \|\psi\|_W = \|\psi\|_{H^1(\Omega)}+\|\partial_x \psi\|_{H^1(\Omega)}+\|D\partial_z \psi\|_{H^1(\Omega)}.
\end{equation}
\end{definition}
 The weighted space $W$ is different from the classical $H^2$ space in general unless unless $D$ is smooth in $\Omega$. The following lemma from \cite{CNW2025} shows that the space $W$ enjoys some Sobolev type inequalities similar to the $H^2$ space.   
\begin{lemma}\label{W-embedding}
 $W$ is similar to $H^2(\Omega)$ in the sense that the following inequalities hold
\begin{equation}
\|\nabla \psi\|_{L^r(\Omega)} \leq C\|\psi\|_W,~~\text{ for any }r\in [2,\infty).
\end{equation}
\end{lemma}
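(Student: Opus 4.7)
My plan is to decompose $\nabla \psi = (\partial_x \psi, \partial_z \psi)$ and handle each component separately via the two-dimensional Sobolev embedding $H^1(\Omega) \hookrightarrow L^r(\Omega)$, which is valid for every $r \in [2,\infty)$ on the bounded planar domain $\Omega$ (with horizontal periodicity). For $\partial_x \psi$ this would be immediate from the definition of $W$. For $\partial_z \psi$ the situation is slightly subtler, since the interface condition forces the flux $D \partial_z \psi$, rather than $\partial_z \psi$ itself, to be continuous across $z = z_i$; this is precisely why $W$ is weighted by $D$ on the vertical derivative. My remedy would be to apply the embedding first to the product $D \partial_z \psi$ and then recover $\partial_z \psi$ using the pointwise lower bound on $D$.

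Concretely, I would argue as follows. Since $\partial_x \psi \in H^1(\Omega)$ by definition of $W$, the 2D Sobolev embedding yields
\[
\|\partial_x \psi\|_{L^r(\Omega)} \leq C \|\partial_x \psi\|_{H^1(\Omega)} \leq C \|\psi\|_W
\]
for any $r \in [2,\infty)$. Similarly, from $D\partial_z \psi \in H^1(\Omega)$,
\[
\|D\partial_z \psi\|_{L^r(\Omega)} \leq C \|D\partial_z \psi\|_{H^1(\Omega)} \leq C \|\psi\|_W.
\]
Since $D|_{\Omega_i} = D_i > 0$ is piecewise constant, $D$ is pointwise bounded below by $D_{\min} := \min_{1\leq i\leq l} D_i > 0$, so
\[
\|\partial_z \psi\|_{L^r(\Omega)} \leq D_{\min}^{-1}\,\|D\partial_z \psi\|_{L^r(\Omega)} \leq C \|\psi\|_W,
\]
and combining the horizontal and vertical estimates gives the claim.

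I do not anticipate any serious analytical obstacle here; the content of the lemma lies in how the space $W$ has been defined. The main conceptual point to emphasize is that $\partial_z \psi$ typically fails to be in $H^1(\Omega)$ even when $\psi \in W$, due to the jumps of $D$ across the interfaces forcing a jump in $\partial_z \psi$ itself. Because $D$ is uniformly bounded above and bounded away from zero, however, these jumps are inconsequential for $L^r$ estimates, and no transmission-type elliptic regularity result is required. The two-dimensional Sobolev embedding together with the pointwise control of $D$ is enough.
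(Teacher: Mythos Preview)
Your argument is correct: decomposing $\nabla\psi$ into its horizontal and vertical components, applying the 2D Sobolev embedding $H^1(\Omega)\hookrightarrow L^r(\Omega)$ to $\partial_x\psi$ and to $D\partial_z\psi$, and then dividing through by the uniform lower bound $D_{\min}$ is exactly the right approach and fully proves the lemma. Note, however, that the paper does not supply its own proof of this lemma; it is quoted from \cite{CNW2025}, so there is nothing to compare against in the present paper. Your proof is the natural one and almost certainly coincides with the argument in that reference.
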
 
Furthermore, one could also establish the Gagliardo-Nirenberg type inequality for the weighted space $W$. 
\begin{lemma}\label{lemmaA1}
  For any $\psi \in V$, it holds that
   \begin{equation}\label{A1-0}
  \|\psi\|_{L^4(\Omega)} \leq C_1\|\nabla\psi\|_{L^2(\Omega)}^\frac12\|\psi\|_{L^2(\Omega)}^\frac12 ,
  \end{equation}
Furthermore, we have 
  \begin{equation}\label{A1-1}
  \|\nabla \psi\|_{L^4(\Omega)} \leq C_2\|\nabla\psi\|_{L^2(\Omega)}^\frac12\|\psi\|_{W}^\frac12, ~~~~\text{ for any }\psi\in W.
  \end{equation}  
Here the constant  $C_1,C_2$ depends only on $\Omega$ and $D_i$.
\end{lemma}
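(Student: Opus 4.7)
For \eqref{A1-0} I would invoke the standard two-dimensional Ladyzhenskaya/Gagliardo--Nirenberg inequality on the bounded Lipschitz domain $\Omega$: every $f\in H^1(\Omega)$ satisfies
\[
\|f\|_{L^4(\Omega)}^2 \leq C\|f\|_{L^2(\Omega)}\|f\|_{H^1(\Omega)}.
\]
Applied to $\psi\in V\subset H^1(\Omega)$, the homogeneous Dirichlet trace $\psi|_{z=0,-H}=0$ (together with $x$-periodicity) yields a Poincar\'e inequality that lets me replace the $H^1$ factor by $\|\nabla\psi\|_{L^2}$, producing \eqref{A1-0} with a constant depending only on $\Omega$.

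For \eqref{A1-1}, the pointwise bound $|\nabla\psi|^4\leq 2(|\partial_x\psi|^4+|\partial_z\psi|^4)$ reduces matters to estimating $\|\partial_x\psi\|_{L^4}$ and $\|\partial_z\psi\|_{L^4}$ separately. The horizontal derivative is well-behaved: by definition of $W$, $\partial_x\psi\in H^1(\Omega)$, and since $\psi$ vanishes on $\{z=0,-H\}$ so does $\partial_x\psi$ in the trace sense, so the same Ladyzhenskaya--Poincar\'e argument used for \eqref{A1-0} gives
\[
\|\partial_x\psi\|_{L^4(\Omega)}^2 \leq C\|\partial_x\psi\|_{L^2}\|\partial_x\psi\|_{H^1} \leq C\|\nabla\psi\|_{L^2}\|\psi\|_W.
\]

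The key technical point, which is precisely where the weighted space $W$ enters in an essential way, is that $\partial_z\psi$ need \emph{not} itself belong to $H^1(\Omega)$: the piecewise-constant diffusivity $D$ forces jumps of $\partial_z\psi$ across the layer interfaces $z=z_i$, so only the flux $D\partial_z\psi$ is globally $H^1$. I would sidestep this by applying Ladyzhenskaya to $D\partial_z\psi\in H^1(\Omega)$ directly,
\[
\|D\partial_z\psi\|_{L^4}^2 \leq C\|D\partial_z\psi\|_{L^2}\|D\partial_z\psi\|_{H^1} \leq C\|\nabla\psi\|_{L^2}\|\psi\|_W,
\]
using the two-sided bound $0<\min_i D_i\leq D\leq \max_i D_i$ to dominate the $L^2$ factor by $\|\nabla\psi\|_{L^2}$ and the definition of $\|\cdot\|_W$ for the $H^1$ factor. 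The same two-sided bound on $D$ yields $\|\partial_z\psi\|_{L^4}\leq (\min_i D_i)^{-1}\|D\partial_z\psi\|_{L^4}$, and combining the two component estimates and taking square roots gives \eqref{A1-1} with $C_2=C_2(\Omega,D_i)$.
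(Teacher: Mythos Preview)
Your proposal is correct and follows essentially the same approach as the paper: both arguments rest on applying the two-dimensional Gagliardo--Nirenberg inequality to the weighted quantity $D\partial_z\psi\in H^1(\Omega)$ (rather than to $\partial_z\psi$ itself, which may jump across interfaces) and then using the two-sided bound $\min_i D_i\leq D\leq \max_i D_i$ to pass back to $\nabla\psi$. The only cosmetic difference is that the paper packages $(\partial_x\psi,\,D\partial_z\psi)$ as a single $H^1$ vector and applies the interpolation inequality once, whereas you treat the two components separately.
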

\begin{proof}
For any $\psi \in V$, it follows from the classical Gagliardo-Nirenberg inequality that 
   \begin{equation}
  \|\psi\|_{L^4(\Omega)} \leq C(\|\nabla\psi\|_{L^2(\Omega)}^\frac12\|\psi\|_{L^2(\Omega)}^\frac12 + \|\psi\|_{L^2(\Omega)}).
  \end{equation}
  Since $\psi$ vanishes on the boundary $\partial \Omega$, one uses Poincar\'e inequality to obtain
     \begin{equation}
\|\psi\|_{L^2(\Omega)}\leq C(1+H^\frac12)\|\psi\|_{L^2(\Omega)}^\frac12\|\nabla\psi\|_{L^2(\Omega)}^\frac{1}{2} =: C_1 \|\psi\|_{L^2(\Omega)}^\frac12\|\nabla\psi\|_{L^2(\Omega)}^\frac{1}{2}
  \end{equation}
% This, together with \eqref{}, gives \eqref{A1-0}. 

Now we suppose that $\psi\in W$. Set $\tilde{\nabla}\psi := (\partial_x \psi, D\partial_z\psi)$. By definition, one has $\tilde{\nabla}\psi \in H^1(\Omega)$ and  
\begin{equation}
\|\tilde{\nabla}\psi\|_{H^1(\Omega)} = \|\partial_x \psi\|_{H^1(\Omega)}+ \|D\partial_z\psi\|_{H^1(\Omega)} \leq \|\psi\|_{W}.
\end{equation}
Applying the Gagliardo-Nirenberg inequality again to $\tilde{\nabla}\psi$ gives
\begin{equation}
\begin{aligned}
\|\tilde{\nabla}\psi\|_{L^4(\Omega)}\leq& ~C(\|\tilde{\nabla}\psi\|_{L^2(\Omega)}^\frac12\|\nabla(\tilde{\nabla}\psi)\|_{L^2(\Omega)}^\frac12+\|\tilde{\nabla}\psi\|_{L^2(\Omega)})\\
\leq& ~C(\|\tilde{\nabla}\psi\|_{L^2(\Omega)}^\frac12\|\tilde{\nabla}\psi\|_{H^1(\Omega)}^\frac12+ \|\tilde{\nabla}\psi\|_{L^2(\Omega)})\\
\leq& ~2C\|\tilde{\nabla}\psi\|_{L^2(\Omega)}^\frac12\|\tilde{\nabla}\psi\|_{H^1(\Omega)}^\frac12\\
\leq & ~2C\|\tilde{\nabla}\psi\|_{L^2(\Omega)}^\frac12\|\psi\|_{W}^\frac12.
\end{aligned}
\end{equation}
Noting 
\begin{equation}
 \min_i D_i \|\nabla\psi\|_{L^p(\Omega)} \leq \|\tilde{\nabla}\psi\|_{L^p(\Omega)} \leq \max_i D_i \|\nabla\psi\|_{L^p(\Omega)} ,
\end{equation}
for any $p\ge 1$, one concludes \eqref{A1-1} with $C_2 = \frac{2C\max_i D_i}{\min_i D_i}$. This finishes the proof. 
\end{proof}

We also recall the equivalence between $\|\cdot\|_W$ and the norm associated with the operator $\mathcal{L}\psi:= -\nabla\cdot (D\psi)$. One may refer to \cite{CNW2025} for the detailed proof.
\begin{lemma}\label{lemma-equivalence}
 There exist two constants $C_l, C_u>0$ depending only on $H$ and $D_i$ such that
\begin{equation} \label{A4-0-1}
C_l\|\mathcal{L}\psi\|_{L^2(\Omega)} \leq \|\psi\|_{W} \leq C_u\|\mathcal{L}\psi\|_{L^2(\Omega)},~~~\text{ for any }\psi \in W.
\end{equation}  
\end{lemma}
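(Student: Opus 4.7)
The plan is to prove the two inequalities separately. Since $D$ depends only on $z$, we have $\mathcal{L}\psi = -D\partial_{xx}\psi - \partial_z(D\partial_z\psi)$, so the lower bound $C_l\|\mathcal{L}\psi\|_{L^2(\Omega)}\le \|\psi\|_W$ follows immediately from the triangle inequality together with the two norms $\|\partial_x\psi\|_{H^1(\Omega)}$ and $\|D\partial_z\psi\|_{H^1(\Omega)}$ already built into the definition of $W$. All the real work lies in the reverse inequality, which is essentially a piecewise elliptic regularity estimate for $-\nabla\cdot(D\nabla\cdot)$ in the presence of the interface matching conditions encoded in $V$.

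For the upper bound I would set $f:=\mathcal{L}\psi$ and proceed by three standard testing steps. \emph{(Step 1, $H^1$ bound.)} Test the equation with $\psi$ to get $(f,\psi)=\int_\Omega D|\nabla\psi|^2$, and combine with the Poincar\'e inequality (valid since $\psi$ vanishes on $\{z=0,-H\}$) to obtain $\|\psi\|_{H^1(\Omega)}\le C\|f\|_{L^2(\Omega)}$. \emph{(Step 2, horizontal derivatives.)} Test the equation with $-\partial_{xx}\psi$. The $-D\partial_{xx}\psi$ term contributes $\int_\Omega D(\partial_{xx}\psi)^2$; for the $-\partial_z(D\partial_z\psi)$ term, integrate by parts once in $z$ and once in $x$, using periodicity in $x$ and $\psi|_{z=0,-H}=0$ to kill every outer boundary term. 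The resulting identity $\int_\Omega D|\nabla\partial_x\psi|^2 = \int_\Omega f\,(-\partial_{xx}\psi)$ followed by Cauchy--Schwarz yields $\|\nabla\partial_x\psi\|_{L^2(\Omega)}\le C\|f\|_{L^2(\Omega)}$, and hence $\|\partial_x\psi\|_{H^1(\Omega)}\le C\|f\|_{L^2(\Omega)}$. \emph{(Step 3, weighted vertical derivatives.)} Read $\partial_z(D\partial_z\psi) = -f - D\partial_{xx}\psi$ off the equation to control $\|\partial_z(D\partial_z\psi)\|_{L^2}$ by what is already bounded, and combine with $\|\partial_x(D\partial_z\psi)\|_{L^2} = \|D\partial_{xz}\psi\|_{L^2}$ from Step 2 and the $L^2$ bound on $D\partial_z\psi$ from Step 1 to obtain $\|D\partial_z\psi\|_{H^1(\Omega)}\le C\|f\|_{L^2(\Omega)}$. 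Summing the three bounds gives $\|\psi\|_W\le C_u\|\mathcal{L}\psi\|_{L^2(\Omega)}$.

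The main obstacle is the interface $z=z_i$. Because $D$ jumps there, a naive integration by parts across the interface in Step 2 would generate jump contributions of the form $\int_{\{z=z_i\}}[D\partial_z\psi]_-^+\,\partial_{xx}\psi\,dx$, which one cannot control a priori. The definition of $W$ is designed precisely to eliminate this difficulty: the requirement $D\partial_z\psi\in H^1(\Omega)$ forces the trace of $D\partial_z\psi$ to be continuous across each $z=z_i$, so the jump vanishes and the integration by parts runs cleanly on all of $\Omega$ despite the piecewise constant character of $D$. Once this structural point is in place, the rest of the estimate is routine.
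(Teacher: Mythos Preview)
The paper does not give its own proof of this lemma; it simply refers the reader to \cite{CNW2025}. Your argument is correct and is the natural one: the lower bound is immediate from the structure $\mathcal{L}\psi=-D\partial_{xx}\psi-\partial_z(D\partial_z\psi)$, and the upper bound follows from the three testing steps you outline, with the crucial observation that the condition $D\partial_z\psi\in H^1(\Omega)$ built into $W$ forces the normal-flux jumps $[D\partial_z\psi]$ to vanish across each interface, so the integrations by parts go through on all of $\Omega$. One minor point worth tightening in a fully rigorous write-up: in Step~2 the test function $-\partial_{xx}\psi$ is a priori only in $L^2$, so the double integration by parts is cleanest justified via horizontal difference quotients (or Fourier series in $x$), exploiting the $x$-independence of $D$; this is standard and does not affect the substance of your argument.
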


Following \cite[Theorem 3.1, 4.1]{CNW2025}, one can obtain the existence, uniqueness, and regularity of solutions to the sharp interface model \eqref{Sharps}-\eqref{bc} as follows.
\begin{proposition}\label{prop1}
For each $\psi_0\in H$, there exists a unique global weak solution $(\Bu,\psi, p)$ to problem \eqref{Sharps}-\eqref{bc}. In particular, the map $\psi_0\mapsto \psi(t)$ is continuous in $\psi_0$ and $t$ in $H$. If, furthermore, $\psi_0\in V$,  we have
\begin{equation}
\psi \in C([0,T);V) \cap L^2(0,T;W),~~~~~ \text{for any }T>0.
\end{equation}
\end{proposition}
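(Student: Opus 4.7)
The plan is to follow the standard program of existence via Galerkin approximation, uniqueness via energy estimate, and higher regularity via testing against the diffusion operator; the only nonstandard features are the presence of the interface jumps in the coefficients (handled via the weighted space $W$ and Lemmas \ref{W-embedding}, \ref{lemma-equivalence}) and the non-symmetric drift term $\phi_b'\Bu\cdot\Be_z$. Since the overall scheme is parallel to \cite{CNW2025}, I would emphasize the a priori estimates and pass quickly over the compactness/limit passage.

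First I would construct Galerkin approximations $\psi^N$ using a Schauder basis of $V$ (e.g., eigenfunctions of $\mathcal{L}$, which is self-adjoint on $H$ with domain $W$). For each $N$, the velocity $\Bu^N$ is obtained from $\psi^N$ by solving the elliptic pressure problem \eqref{pressure}, so Lemma \ref{estimate-p} gives $\|\Bu^N\|_{L^r(\Omega)}\lesssim \|\psi^N\|_{L^r(\Omega)}$ uniformly. The core a priori estimate is the $L^2$ one: testing the equation against $\psi^N$ yields
\begin{equation}\nonumber
\tfrac{1}{2}\tfrac{d}{dt}\|\psi^N\|_{L^2}^{2}+(D\nabla\psi^N,\nabla\psi^N)=-(\phi_b'\Bu^N\cdot\Be_z,\psi^N)+(D\phi_b'',\psi^N),
\end{equation}
since $(\Bu^N\cdot\nabla\psi^N,\psi^N)=0$ by incompressibility and the boundary/interface conditions. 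The drift term is where the specific choice of $\phi_b$ matters: using $|\phi_b'|\leq c_\Delta/\delta$ together with Lemma \ref{estimate-p} and Cauchy--Schwarz (splitting the support of $\phi_b'$ near the boundary and applying Poincar\'e locally) one absorbs a fraction of $(D\nabla\psi^N,\nabla\psi^N)$ on the left-hand side, as signalled after \eqref{bc}. This yields uniform bounds in $L^\infty(0,T;H)\cap L^2(0,T;V)$.

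Next I would extract a bound on $\partial_t\psi^N$ in $L^2(0,T;V^*)$: for $\varphi\in V$ with $\|\varphi\|_V\leq 1$, pair the evolution equation with $\varphi$ and estimate each term using $\Bu^N\in L^4$ (via Lemma \ref{lemmaA1} and Lemma \ref{estimate-p}), $\nabla\psi^N\in L^2$, and the prescribed bounds on $\phi_b',\phi_b''$. With this and Aubin--Lions one obtains strong convergence $\psi^N\to\psi$ in $L^2(0,T;H)$, which suffices to pass to the limit in the nonlinear term $(\Bu^N\cdot\nabla\psi^N,\varphi)=-(\Bu^N\psi^N,\nabla\varphi)$, because $\Bu^N$ converges strongly in $L^2$ by the linearity/continuity of the map $\psi\mapsto\Bu$ provided by Lemma \ref{estimate-p}. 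Uniqueness and continuous dependence in $H$ follow by subtracting two solutions, testing the difference against itself, and using the usual Ladyzhenskaya-type bound $\|\Bu\|_{L^4}\|\nabla\psi\|_{L^2}\|\psi\|_{L^4}$ combined with \eqref{A1-0} and Gronwall.

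For the higher regularity claim when $\psi_0\in V$, I would test the approximate equation against $\mathcal{L}\psi^N=-\nabla\cdot(D\nabla\psi^N)$. After integration by parts in each subdomain $\Omega_i$ and summation over $i$, the interface contributions cancel by \eqref{interface1}, yielding
\begin{equation}\nonumber
\tfrac{1}{2}\tfrac{d}{dt}(D\nabla\psi^N,\nabla\psi^N)+\|\mathcal{L}\psi^N\|_{L^2}^{2}=(\Bu^N\cdot\nabla\psi^N,\mathcal{L}\psi^N)+(\phi_b'\Bu^N\cdot\Be_z,\mathcal{L}\psi^N)-(D\phi_b'',\mathcal{L}\psi^N).
\end{equation}
The hardest term is $(\Bu^N\cdot\nabla\psi^N,\mathcal{L}\psi^N)$; I would bound it via H\"older by $\|\Bu^N\|_{L^4}\|\nabla\psi^N\|_{L^4}\|\mathcal{L}\psi^N\|_{L^2}$, then use \eqref{A1-1} and Lemma \ref{lemma-equivalence} to rewrite $\|\nabla\psi^N\|_{L^4}\leq C\|\nabla\psi^N\|_{L^2}^{1/2}\|\mathcal{L}\psi^N\|_{L^2}^{1/2}$ and close the estimate via Young's inequality, leaving a $\|\nabla\psi^N\|_{L^2}^{\alpha}\|\mathcal{L}\psi^N\|_{L^2}^{2}$ configuration that Gronwall handles. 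The linear terms are harmless given the pointwise bounds on $\phi_b',\phi_b''$. This gives $\psi\in L^\infty(0,T;V)\cap L^2(0,T;W)$; continuity $\psi\in C([0,T);V)$ then follows from the standard Lions--Magenes argument combined with the $\partial_t\psi\in L^2(0,T;H)$ bound that the same test against $\mathcal{L}\psi^N$ produces. The main obstacle throughout is the coefficient jumps across the interfaces, which forces the use of the weighted space $W$ rather than $H^2$; this is precisely what Lemmas \ref{W-embedding}--\ref{lemma-equivalence} are designed to address.
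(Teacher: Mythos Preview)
Your proposal is correct. The paper does not give its own proof of this proposition but simply invokes \cite[Theorems~3.1 and~4.1]{CNW2025}; your sketch---Galerkin approximation, $L^2$ energy estimate with the drift $\phi_b'\Bu\cdot\Be_z$ absorbed via the boundary-layer structure of $\phi_b$, Aubin--Lions compactness, and higher regularity by testing against $\mathcal{L}\psi$ using \eqref{A1-1} and Lemma~\ref{lemma-equivalence}---is exactly the program of that reference, and the very same estimates reappear (with explicit constants) in the paper's own Propositions~\ref{L^2}--\ref{finite-time}.
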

% The title of your section 3:

%%%%% 4
\section{Uniform estimates on the sharp interface model with a thin layer} 

In this section, we establish the uniform $H^1$-estimates for the solution to the problem \eqref{Sharps}-\eqref{bc}. These long time uniform in $\varepsilon$ estimates stated in Propositions \ref{L^2}-\ref{H^1} guarantee the existence of a uniform bounded absorbing set  in $V$, a crucial component of the existence and convergence of global attractors. Furthermore, the finite-time uniform estimates given in Proposition \ref{finite-time} play a crucial role in our subsequent proof of the finite-time convergence of the solution as the thickness of the layer tends to zero.

\begin{proposition}\label{L^2}
Let $\delta$ be chosen so that \eqref{delta_2} is satisfied.
Suppose that $\psi_0\in H$ and $(\Bu,\psi,p)$ is a solution to the problem \eqref{Sharps}-\eqref{bc}.  It holds that 
\begin{equation}\label{L^2-1}
\|\psi(t)\|_{L^2(\Omega)}^2 \leq \|\psi_0\|_{L^2(\Omega)}^2 e^{-\frac{\min_i D_i}{H^2}t} +\frac{M_1H^2}{\min_i D_i}\left(1- e^{-\frac{\min_i D_i}{H^2}t}\right),
\end{equation}
and 
\begin{equation}\label{L^2-1-1}
\int_0^t \|\sqrt{D} \nabla \psi(s)\|_{L^2(\Omega)}^2\dd s  \leq M_1 t +\| \psi_0\|_{L^2(\Omega)}^2,
\end{equation}
for any $t>0$, where
\begin{equation}\label{M1}
M_1= \frac{8 c_\Delta^2 L }{\delta}\frac{ (\max_i D_i )^2}{\min_i  D_i }.
\end{equation}

In particular, for any  $t\ge T_1:=T_1(\|\psi_0\|_{L^2(\Omega)}) =\frac{2H^2}{\min_i D_i}\ln \|\psi_0\|_{L^2(\Omega)}$, one has 
\begin{equation}\label{L^2-1-2}
\|\psi(t) \|_{L^2(\Omega)}^2\leq  \frac{M_1H^2}{\min_i D_i}+1,~~~~~~\int_t^{t+1} \|\sqrt{D}\nabla\psi \|_{L^2(\Omega)}^2 \dd s \leq  \frac{M_1H^2 }{\min_i D_i } +1.
\end{equation}
\end{proposition}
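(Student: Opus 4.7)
My plan is to derive an $L^2$ energy identity for $\psi$, control the two lower-order source terms produced by the background profile, and then close via Gronwall and Poincar\'e. Testing the third equation of \eqref{Sharps} against $\psi$ and using $\nabla\cdot\Bu=0$, $\psi|_{z=0,-H}=0$, $\Bu\cdot\Be_z|_{z=0,-H}=0$, horizontal periodicity, and continuity of $\psi$ across the interior interfaces, the convective contribution $(\Bu\cdot\nabla\psi,\psi)$ vanishes and the diffusive term equals $\|\sqrt{D}\nabla\psi\|_{L^2(\Omega)}^2$; this yields
\begin{equation*}
\tfrac{1}{2}\tfrac{d}{dt}\|\psi\|_{L^2(\Omega)}^2 + \|\sqrt{D}\nabla\psi\|_{L^2(\Omega)}^2 = -\int_\Omega \phi_b'(\Bu\cdot\Be_z)\psi\dd\Bx + \int_\Omega D\phi_b''\psi\dd\Bx =: I_1 + I_2.
\end{equation*}

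For the forcing term $I_2$, I would exploit the forward-referenced condition \eqref{delta_2}, which I expect to ensure (at least) that $\delta$ is smaller than the distance from either wall to the nearest interior interface, so that $\phi_b'$ vanishes at every $z_i$. Integrating by parts in $z$ layer by layer (where $D=D_i$ is constant) gives $I_2 = -\int_\Omega D\phi_b'\partial_z\psi\dd\Bx$: the interface contributions cancel because $\phi_b'(z_i)=0$, and the wall contributions vanish because $\psi=0$. Cauchy--Schwarz and Young then absorb a small portion into $\|\sqrt{D}\nabla\psi\|^2$ at the cost of a bulk $O(c_\Delta^2 L\max_i D_i/\delta)$, using the explicit estimate $\|\sqrt{D}\phi_b'\|_{L^2}^2 \leq 2Lc_\Delta^2\max_i D_i/\delta$ coming from \eqref{phi_b1}--\eqref{phi_b2}.

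The non-symmetric term $I_1$ is the main obstacle, as flagged in the remark following \eqref{bc}. The key point is that although $\|\phi_b'\|_{L^\infty} = O(\delta^{-1})$, the support of $\phi_b'$ is a strip of width $\delta$ adjacent to the walls, where $\psi$ vanishes at the outer boundary. A one-dimensional Hardy inequality based on $\psi(x,0)=0$ yields
\begin{equation*}
\|\psi\|_{L^2((0,L)\times(-\delta,0))}\leq \tfrac{\delta}{\sqrt{2}}\|\partial_z\psi\|_{L^2((0,L)\times(-\delta,0))},
\end{equation*}
with an analogous bound near $z=-H$. Coupling this with Lemma \ref{estimate-p} (which via $\Bu=-K(\nabla p+\psi\Be_z)$ delivers $\|\Bu\|_{L^2}\leq C_0\|\psi\|_{L^2}$) cancels the $\delta^{-1}$ and leaves
\begin{equation*}
|I_1| \leq \tfrac{c_\Delta}{\delta}\|\Bu\|_{L^2(\Omega)}\cdot\tfrac{\delta}{\sqrt{2}}\|\partial_z\psi\|_{L^2(\Omega)} \leq \tfrac{C_0 c_\Delta}{\sqrt{2}}\|\psi\|_{L^2}\|\partial_z\psi\|_{L^2}.
\end{equation*}
A further Young's inequality combined with the Poincar\'e bound $\|\psi\|_{L^2}\leq H\|\partial_z\psi\|_{L^2}$ (coming from $\psi|_{z=0,-H}=0$) then absorbs this into the diffusion; the precise form of \eqref{delta_2} is presumably selected so that this last absorption is tight enough for the constants in $M_1$ and in the decay rate $\min_i D_i/H^2$ to come out exactly as stated.

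Combining these estimates yields $\tfrac{d}{dt}\|\psi\|^2 + \|\sqrt{D}\nabla\psi\|^2 \leq M_1$; direct time-integration on $[0,t]$ produces \eqref{L^2-1-1} after discarding $\|\psi(t)\|^2\geq 0$. Applying Poincar\'e once more in the form $\|\sqrt{D}\nabla\psi\|^2\geq (\min_i D_i/H^2)\|\psi\|^2$ upgrades the inequality to the scalar linear ODE $\tfrac{d}{dt}y + (\min_i D_i/H^2)y \leq M_1$ with $y=\|\psi\|^2$, whose explicit solution is \eqref{L^2-1}. Finally, \eqref{L^2-1-2} is immediate: the choice $T_1=(2H^2/\min_i D_i)\ln\|\psi_0\|$ makes the exponentially decaying piece of \eqref{L^2-1} at most $1$ for $t\geq T_1$, and integrating the $L^2_tL^2_x$ bound on $[t,t+1]$ starting from $\psi(t)$ in place of $\psi_0$ produces the integral estimate.
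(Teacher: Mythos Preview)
Your treatment of $I_2$ via integration by parts in $z$ is a legitimate alternative to the paper's direct bound, but your handling of $I_1$ has a genuine gap. After applying the strip Hardy inequality only to $\psi$ and the elliptic bound $\|\Bu\|_{L^2}\le C_0\|\psi\|_{L^2}$, you obtain
\[
|I_1|\le \frac{C_0 c_\Delta}{\sqrt 2}\,\|\psi\|_{L^2}\|\partial_z\psi\|_{L^2}\le \frac{C_0 c_\Delta H}{\sqrt 2\,\min_i D_i}\,\|\sqrt{D}\nabla\psi\|_{L^2}^2,
\]
and this prefactor is \emph{independent of $\delta$}. You then appeal to \eqref{delta_2} to make it small enough to absorb, but \eqref{delta_2} is the condition $\delta\le \tfrac{\min_i K_i\min_i D_i}{8(\max_i K_i)^2 c_\Delta}$: it tunes only $\delta$, which has already dropped out of your estimate. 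Without an extraneous structural smallness assumption on $c_\Delta H/\min_i D_i$ (not a hypothesis of the proposition), the absorption fails and neither the differential inequality $y'+\|\sqrt{D}\nabla\psi\|^2\le M_1$ nor the stated decay rate $\min_i D_i/H^2$ can be recovered.

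The missing idea is that $u_z$ \emph{also} vanishes at $z=0,-H$, so the same strip Poincar\'e gives $\|u_z\|_{L^2(\Omega_\delta)}\le\delta\|\partial_z u_z\|_{L^2(\Omega_\delta)}$. Together with the Hardy on $\psi$ this produces an \emph{extra} factor of $\delta$: $|I_1|\le c_\Delta\,\delta\,\|\partial_z u_z\|_{L^2}\|\partial_z\psi\|_{L^2}$. The paper then writes $\partial_z u_z=-\partial_x u_x$ by incompressibility and, since $K$ depends only on $z$, differentiates the pressure equation \eqref{pressure} in $x$ to get $\|\partial_x\nabla p\|_{L^2}\le \tfrac{\max_i K_i}{\min_i K_i}\|\partial_x\psi\|_{L^2}$, whence $\|\partial_x u_x\|_{L^2}\le \tfrac{2(\max_i K_i)^2}{\min_i K_i}\|\partial_x\psi\|_{L^2}$. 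This yields $|I_1|\le \tfrac{2(\max_i K_i)^2 c_\Delta\,\delta}{\min_i K_i}\|\nabla\psi\|_{L^2}^2$, and \eqref{delta_2} is precisely the choice that forces this to be at most $\tfrac14\min_i D_i\|\nabla\psi\|_{L^2}^2$.
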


\begin{proof}
Testing the equation \eqref{Sharps}$_3$ by $\psi$ and using integration by parts yield  
\begin{equation}\label{L^2-2}
\frac12  \frac{\dd}{\dd t} \|\psi\|_{L^2(\Omega)}^2 + \|\sqrt{D}\nabla\psi\|_{L^2(\Omega)}^2 =  -   \int_\Omega  \phi_b' u_z \psi \dd \Bx + \int_\Omega D \varphi_b''\psi \dd \Bx .
\end{equation} 
By virtue of \eqref{phi_b1} and \eqref{phi_b2},  we have 
\begin{equation}\label{L^2-3}
\left|\int_\Omega  \phi_b' u_z \psi \dd \Bx\right| = \left|\int_{\Omega_\delta}  \phi_b' u_z \psi \dd \Bx\right| \leq  c_\Delta \delta^{-1}\|u_z\|_{L^2(\Omega_\delta)} \|\psi\|_{L^2(\Omega_\delta)},
\end{equation} 
where $\Omega_\delta := \Omega \cap (\{-H<z<-H+\delta\} \cup \{-\delta <z<0\})$. Noting that both $u_z$ and $\psi$ vanish on  the boundary $\{z=0\}\cup \{z=-H\}$, one uses Poincar\'e's inequality to obtain
\begin{equation} \label{L^2-3-1}
\|u_z\|_{L^2(\Omega_\delta)} \leq   \delta\|\partial_z u_z\|_{L^2(\Omega_\delta)},~~\|\psi\|_{L^2(\Omega_\delta)} \leq   \delta\|\partial_z \psi\|_{L^2(\Omega_\delta)},
\end{equation}
and thus,
\begin{equation}\label{L^2-4}
\left|\int_\Omega  \phi_b' u_z \psi \dd \Bx\right|\leq c_\Delta\delta\|\partial_z u_z\|_{L^2(\Omega_\delta)}\|\partial_z \psi\|_{L^2(\Omega_\delta)} \leq  c_\Delta\delta \|\partial_x u_x\|_{L^2(\Omega)}\|\partial_z \psi\|_{L^2(\Omega)}.
\end{equation} 
Here we have used the fact that $\partial_z u_z = - \partial_x u_x$ due to the incompressibility condition $\nabla \cdot \Bu =0$. In view of \eqref{Sharps}$_1$, we have
\begin{equation}\label{L^2-5}
\|\partial_x \Bu\|_{L^{2}(\Omega)} \leq \max_i  K_i(\|\partial_x \nabla p\|_{L^{2}(\Omega)}+ \|\partial_x  \psi\|_{L^2(\Omega)}),
\end{equation}
while the pressure $p$ is determined by the elliptic problem \eqref{pressure}. Since the permeability coefficient $K$ is independent of $x$, one may take the derivative of \eqref{pressure} with respect to $x$ and obtain 
 \begin{equation}\nonumber\label{pressure_x}
  \left\{\begin{aligned}
&-\nabla\cdot(K \nabla  \partial_x p) = \nabla\cdot (K \partial_x \psi \Be_z) ,  ~~~~\text{ in } \Omega,\\
&\partial_z \partial_x p(x,-H)  = \partial_z \partial_x p(x,0)  = 0.
  \end{aligned}\right. 
\end{equation}
Then it follows from the standard elliptic estimate  that we have
\begin{equation}\nonumber
\|\partial_x\nabla p\|_{L^2(\Omega)} \leq   \frac{\max_i  K_i}{\min_i  K_i}\|\partial_x\psi\|_{L^2(\Omega)}.
\end{equation}
This, together with \eqref{L^2-5}, yields
\begin{equation}\nonumber
\|\partial_x \Bu\|_{L^{2}(\Omega)} \leq \max_i  K_i(\|\partial_x \nabla p\|_{L^{2}(\Omega)}+ \|\partial_x  \psi\|_{L^2(\Omega)})\leq  \frac{2(\max_i  K_i)^2}{\min_i  K_i} \|\partial_x\psi\|_{L^2(\Omega)}.
\end{equation}
Hence,
\begin{equation}\label{L^2-6}
\left|\int_\Omega  \phi_b' u_z \psi \dd \Bx \right|\leq c_\Delta\delta \|\partial_x u_x\|_{L^2(\Omega)}\|\partial_z \psi\|_{L^2(\Omega)}\leq  \frac{2(\max_i  K_i)^2 c_\Delta \delta }{\min_i  K_i} \|\nabla \psi\|_{L^2(\Omega)}^2.
\end{equation}
On the other hand, using H\"older's inequality, Young's inequality and \eqref{L^2-3-1}, we have
\begin{equation}\label{L^2-7}
\begin{aligned}
\left|\int_\Omega  D \varphi_b''\psi \dd \Bx\right| \leq  &~ 2c_\Delta \delta^{-2} \max_i D_i  |\Omega_\delta|^\frac12 \| \psi\|_{L^2(\Omega_\delta)}  \\
\leq  &~ 2c_\Delta L^\frac12 \delta^{-\frac12} \max_i D_i    \| \partial_z\psi\|_{L^2(\Omega_\delta)} \\ 
\leq &~ \frac{1}{4}\min_i  D_i \|\nabla \psi\|_{L^2(\Omega)}^2+  \frac{4 c_\Delta^2 L}{\delta}\frac{ (\max_i D_i )^2}{\min_i  D_i }.
\end{aligned}
\end{equation}

Substituting \eqref{L^2-6}-\eqref{L^2-7} into \eqref{L^2-2} and choosing 
\begin{equation}\label{delta_2}
\delta \leq  \frac{\min_i  K_i \min_i D_i}{8(\max_i  K_i)^2 c_\Delta},
\end{equation} 
one has 
\begin{equation}\label{L^2-8}
 \frac{\dd}{\dd t} \|\psi\|_{L^2(\Omega)}^2  +   \|\sqrt{D}\nabla\psi\|_{L^2(\Omega)}^2 \leq  \frac{8 c_\Delta^2 L }{\delta}\frac{ (\max_i D_i )^2}{\min_i  D_i }=:M_1.
\end{equation}  
Integrating \eqref{L^2-8} over $[0,T]$ gives \eqref{L^2-1-1}. Furthermore, due to Poincar\'e inequality, we also infer from \eqref{L^2-8} that 
\begin{equation}\nonumber
 \frac{\dd}{\dd t} \|\psi\|_{L^2(\Omega)}^2  + \frac{\min_i D_i}{H^2} \|\psi\|_{L^2(\Omega)}^2 \leq M_1.
\end{equation}  
Using the classical  Gr\"onwall's inequality, we get \eqref{L^2-1}.  In particular, it follows from \eqref{L^2-1} that
\begin{equation}\label{L^2-9}
\|\psi(t)\|_{L^2(\Omega)}^2 \leq \frac{M_1H^2}{\min_iD_i}+1,~~~~\text{ for any }t \ge T_1:=\frac{2H^2}{\min_iD_i}\ln  \|\psi_0\|_{L^2(\Omega)}.
\end{equation}
Finally, for $t\ge T_1$, we integrate the energy inequality \eqref{L^2-8} over $[t,t+1]$ to obtain
\begin{equation}\label{L^2-10}
\|\psi(t+1) \|_{L^2(\Omega)}^2+  \int_t^{t+1} \|\sqrt{D}\nabla\psi \|_{L^2(\Omega)}^2 \dd s \leq \|\psi(t) \|_{L^2(\Omega)}^2 \leq \frac{M_1H^2}{\min_i D_i}+1.
\end{equation}
Combining \eqref{L^2-9} and \eqref{L^2-10}, one proves \eqref{L^2-1-2} and finishes the proof of the proposition.
\end{proof}

\begin{proposition}\label{H^1}
Suppose that $\psi_0\in H$ and the triple $(\Bu,\psi, p)$ is a solution to the problem \eqref{Sharps}-\eqref{bc}. Then for any $t\ge T_1+1$, it holds that
\begin{equation} \label{2-14}
\|\sqrt{D}\nabla \psi(t)\|_{L^2(\Omega)}^2   \leq  M_5.
\end{equation}
where the constant $M_5$ is given in \eqref{M5}.
\end{proposition}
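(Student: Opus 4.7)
My plan is to test the third equation of \eqref{Sharps} against $\mathcal{L}\psi := -\nabla\cdot(D\nabla\psi)$. Layer-by-layer integration by parts (using \eqref{interface1} and \eqref{bc}) produces the identity
\begin{equation*}
\tfrac{1}{2}\tfrac{d}{dt}\|\sqrt{D}\nabla\psi\|_{L^2(\Omega)}^2 + \|\mathcal{L}\psi\|_{L^2(\Omega)}^2 = -(\Bu\cdot\nabla\psi,\mathcal{L}\psi) - (\phi_b' u_z,\mathcal{L}\psi) + (D\phi_b'',\mathcal{L}\psi),
\end{equation*}
and the task is to absorb the three right-hand terms into $\tfrac{1}{2}\|\mathcal{L}\psi\|_{L^2}^2$ plus quantities controllable in the uniform Gronwall sense.

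For the convective term I would use H\"older to get $|(\Bu\cdot\nabla\psi,\mathcal{L}\psi)|\leq \|\Bu\|_{L^4}\|\nabla\psi\|_{L^4}\|\mathcal{L}\psi\|_{L^2}$, then estimate $\|\Bu\|_{L^4}\leq C\|\psi\|_{L^4}$ by combining \eqref{Sharps}$_1$ with Lemma \ref{estimate-p} at $r=4$, and apply \eqref{A1-0} to obtain $\|\Bu\|_{L^4}\leq C\|\psi\|_{L^2}^{1/2}\|\nabla\psi\|_{L^2}^{1/2}$; further, \eqref{A1-1} together with Lemma \ref{lemma-equivalence} gives $\|\nabla\psi\|_{L^4}\leq C\|\nabla\psi\|_{L^2}^{1/2}\|\mathcal{L}\psi\|_{L^2}^{1/2}$. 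Young's inequality with the pair $(4/3,4)$ then yields
\begin{equation*}
|(\Bu\cdot\nabla\psi,\mathcal{L}\psi)|\leq \tfrac{1}{4}\|\mathcal{L}\psi\|_{L^2}^2 + C\|\psi\|_{L^2}^2\|\nabla\psi\|_{L^2}^4.
\end{equation*}
For the two background terms, I would recycle the $\Omega_\delta$-Poincar\'e argument from the proof of Proposition \ref{L^2}: since $\supp\phi_b',\supp\phi_b''\subset\Omega_\delta$ and $u_z$ vanishes at $z=0,-H$, the chain $\|\phi_b'u_z\|_{L^2}\lesssim \|\nabla\psi\|_{L^2}$ combined with $\|\phi_b''\|_{L^2}^2\lesssim \delta^{-3}$ and Young's inequality produces bounds of the form $\tfrac{1}{4}\|\mathcal{L}\psi\|_{L^2}^2 + C\|\nabla\psi\|_{L^2}^2 + C_\delta$, where $\delta$ is fixed through \eqref{delta_2} independently of the thin-layer thickness.

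Putting everything together gives the differential inequality
\begin{equation*}
\tfrac{d}{dt}\|\sqrt{D}\nabla\psi\|_{L^2}^2 \leq g(t)\,\|\sqrt{D}\nabla\psi\|_{L^2}^2 + C\|\nabla\psi\|_{L^2}^2 + C_\delta,\qquad g(t):=C\|\psi(t)\|_{L^2}^2\|\nabla\psi(t)\|_{L^2}^2.
\end{equation*}
The conclusion then follows from the classical uniform Gronwall lemma on $[t,t+1]$ for any $t\geq T_1$: by \eqref{L^2-1-2}, $\sup_{s\in[t,t+1]}\|\psi(s)\|_{L^2}^2$ and $\int_t^{t+1}\|\nabla\psi\|_{L^2}^2 ds$ are uniformly bounded, so both $\int_t^{t+1} g(s)\,ds$ and $\int_t^{t+1}\|\sqrt{D}\nabla\psi\|_{L^2}^2 ds$ are uniformly bounded, producing an absolute bound $M_5$ on $\|\sqrt{D}\nabla\psi(t+1)\|_{L^2}^2$. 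Hence \eqref{2-14} holds for $t\ge T_1+1$ with $M_5$ depending only on $D_i,K_i,c_\Delta,H,L$ and the fixed $\delta$, in particular independent of the thin-layer thickness.

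The principal obstacle is the convective term: in the constant-coefficient Darcy--Boussinesq setting one would ordinarily close it via $H^2$ regularity of $\psi$, but here $D$ and $K$ jump across the interfaces, so classical $H^2$ theory is unavailable. This is precisely where the weighted Gagliardo--Nirenberg inequality \eqref{A1-1} and the $W$--$\mathcal{L}$ equivalence of Lemma \ref{lemma-equivalence} become essential --- they deliver just enough control on $\nabla\psi$ in $L^4$ to pin down the exponent on $\|\nabla\psi\|_{L^2}$ in the differential inequality at the critical value that keeps $g$ integrable on unit time intervals and hence makes the uniform Gronwall argument go through.
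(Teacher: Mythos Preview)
Your proposal is correct and follows essentially the same route as the paper: test \eqref{Sharps}$_3$ against $\mathcal{L}\psi$, control the convective term via $\|\Bu\|_{L^4}\|\nabla\psi\|_{L^4}\|\mathcal{L}\psi\|_{L^2}$ using Lemma~\ref{estimate-p}, \eqref{A1-0}, \eqref{A1-1} and Lemma~\ref{lemma-equivalence}, handle the background terms by the $\Omega_\delta$-Poincar\'e trick from Proposition~\ref{L^2}, and close with the uniform Gronwall lemma on $[t,t+1]$ for $t\ge T_1$. The only cosmetic difference is that the paper groups the $C\|\nabla\psi\|_{L^2}^2$ contribution from the $\phi_b' u_z$ term into the multiplicative factor $g(t)$ (its $M_3$), whereas you place it in the additive forcing; both versions feed the uniform Gronwall lemma identically since $\int_t^{t+1}\|\nabla\psi\|_{L^2}^2\,ds$ is already controlled by \eqref{L^2-1-2}.
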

\begin{proof}
Multiplying \eqref{Sharps}$_3$ by $\mathcal{L}\psi := -\nabla\cdot (D\nabla\psi)$ and integrating result equation over $\Omega$, we obtain
\begin{equation}\label{H^1-3}
\frac12  \frac{\dd}{\dd t} \|\sqrt{ D}\nabla \psi\|_{L^2(\Omega)}^2 + \|\mathcal{L}\psi\|_{L^2(\Omega)}^2 =  (\Bu\cdot\nabla  \psi,\mathcal{L}\psi) + (\phi_b' u_z, \mathcal{L}\psi ) -  (D\varphi_b'',\mathcal{L}\psi)   .
\end{equation}
Using H\"older's inequality, Young's inequality and \eqref{phi_b2}, one has 
\begin{equation}\label{H^1-4}
\begin{aligned}
|(D \varphi_b'',\mathcal{L}\psi)|\leq  &~ \max_i D_i \frac{2c_\Delta}{\delta^2}\int_{\Omega_{\delta}} | \mathcal{L}\psi|  \dd \Bx  \\
\leq  &~ \max_i D_i \frac{2c_\Delta}{\delta^2} |\Omega_\delta|^\frac12 \|\mathcal{L}\psi\|_{L^2(\Omega_\delta)}  \\
\leq  &\frac{1}{4}\|\mathcal{L}\psi\|_{L^2(\Omega)}^2 +4 c_\Delta^2 L \delta^{-3} (\max_i D_i )^2.
\end{aligned}
\end{equation}
In a way similar to \eqref{L^2-3}-\eqref{L^2-6}, one may use Young's inequality to deduce
\begin{equation}\label{H^1-5}
\begin{aligned}
\left|(\phi_b' u_z \mathcal{L}\psi)\right| 
\leq  &~ c_\Delta \delta^{-1} \|u_z\|_{L^2(\Omega_\delta)}\|\mathcal{L}\psi\|_{L^2(\Omega_\delta)}\leq  c_\Delta \|\partial_x u_x\|_{L^2(\Omega)}\|\mathcal{L}\psi\|_{L^2(\Omega)}\\
\leq  &~   \frac{2(\max_i  K_i)^2 c_\Delta  }{\min_i  K_i} \|\partial_x \psi\|_{L^2(\Omega)}\|\mathcal{L}\psi\|_{L^2(\Omega)}\\
\leq  &~ \frac{1}{4}\|\mathcal{L}\psi\|_{L^2(\Omega)}^2+\frac{4(\max_i  K_i)^4 c_\Delta^2  }{(\min_i  K_i)^2}\|\nabla\psi\|_{L^2(\Omega)}^2.
\end{aligned}
\end{equation} 
Furthermore, using H\"older's inequality, Young's inequality and Lemma \ref{lemmaA1}, we have 
\begin{equation}\label{H^1-6}
\begin{aligned}
\left|(\Bu\cdot\nabla \psi, \mathcal{L}\psi)\right| 
\leq  &~ \|\Bu\|_{L^4(\Omega)} \|\nabla \psi\|_{L^4(\Omega)}  \|\mathcal{L} \psi\|_{L^2(\Omega)}\\
\leq  &~ C_2 C_u^\frac12\|\Bu\|_{L^4(\Omega)} \|\nabla \psi\|_{L^2(\Omega)}^\frac12 \|\mathcal{L} \psi\|_{L^2(\Omega)}^\frac32\\ 
\leq  &~\frac{1}{4}\|\mathcal{L} \psi\|_{L^2(\Omega)}^2+  \frac{27}{4}C_2^2 C_u^2\|\Bu\|_{L^4(\Omega)}^4 \|\nabla \psi\|_{L^2(\Omega)}^2.
\end{aligned}
\end{equation} 
By virtue of \eqref{Sharps}$_1$, Lemma \ref{lemmaA1} and Poincar\'e's inequality, it holds that
\begin{equation}\label{H^1-7}
\begin{aligned}
  \|\Bu\|_{L^4(\Omega)} \leq &~\max_i K_i(\|\nabla p\|_{L^4(\Omega)}+\|\psi\|_{L^4(\Omega)})\\ 
\leq  &  (1+C_p) \max_i K_i \|\psi\|_{L^4(\Omega)} \\ 
\leq  &C_1  (1+C_p) \max_i K_i \|\psi\|_{L^2(\Omega)}^\frac12 \|\nabla \psi\|_{L^2(\Omega)}^\frac12.
\end{aligned}
\end{equation} 
%where $C(4,K_i)$ is the constant appeared in Lemma \ref{estimate-p}.
Substituting the estimates \eqref{H^1-4}-\eqref{H^1-8} into \eqref{H^1-3}, one obtains
\begin{equation}\label{H^1-8}
\begin{aligned}
&~\frac{\dd}{\dd t} \|\sqrt{D}\nabla \psi\|_{L^2(\Omega)}^2  + \frac{1}{2} \|\mathcal{L}\psi\|_{L^2(\Omega)}^2 \\
\leq&~ M_2 + (M_3  + M_4\|\psi\|_{L^2(\Omega)}^2 \|\sqrt{D}\nabla \psi\|_{L^2(\Omega)}^2)\|\sqrt{D} \nabla \psi\|_{L^2(\Omega)}^2  ,
\end{aligned}
\end{equation} 
with constants
 \begin{equation}\label{M2M3}
  M_2= 8 c_\Delta^2 L \delta^{-3} (\max_i D_i )^2,~~M_3 = \frac{8(\max_i  K_i)^4 c_\Delta^2  }{(\min_i  K_i)^2\min_i  D_i},
\end{equation}
and 
\begin{equation}\label{M4}
M_4 = \frac{27}{2}C_1^4C_2^2 C_u^2(1+C_p)^4\frac{(\max_i K_i)^4}{(\min_i D_i)^2}.
\end{equation}   

Set
\begin{equation}\nonumber
y(t) := \|\sqrt{D}\nabla \psi\|_{L^2(\Omega)}^2, ~~~g(t) := M_3+M_4\|\psi\|_{L^2(\Omega)}^2 \|\sqrt{D}\nabla \psi\|_{L^2(\Omega)}^2,~~~h(t) := M_2.
\end{equation}
Then, for any $t>0$, we have
\begin{equation}\label{2-20}
  y'(t) \leq g(t) y(t) + h(t) 
\end{equation}
and 
\begin{equation}
\int_{t}^{t+1} h(s)\dd s = M_2.
\end{equation}
Furthermore, if $t\ge T_1$, it follows from Proposition \ref{L^2} that
\begin{equation}\nonumber
  \int_{t}^{t+1} y(s)\dd s = \int_{t}^{t+1} \|\sqrt{D}\nabla \psi\|_{L^2(\Omega)}^2 \dd s \leq \frac{M_1H^2 }{\min_i D_i} +1
\end{equation} 
and 
\begin{equation}\nonumber
\begin{aligned}
      \int_{t}^{t+1} g(s)\dd s \leq&~ M_3 +M_4\sup_{s\in[t,t+1]} \|\psi(s)\|_{L^2(\Omega)}^2\int_{t}^{t+1}   \|\sqrt{D}\nabla \psi\|_{L^2(\Omega)}^2 \dd s \\
      \leq&~ M_3+ M_4 \left(\frac{M_1H^2}{\min_i D_i}+1\right)^2.
\end{aligned}
\end{equation} 
Then an application of the uniform Gr\"onwall lemma \cite[Lemma 1.1]{Temam1997} gives
\begin{equation}\label{M5}
y(t+1)  \leq  \left(M_2+  \frac{M_1H^2}{\min_i D_i}+1\right) \operatorname{exp}\left[M_3+M_4\left(\frac{M_1H^2}{\min_i D_i}+1\right)^2\right]=:M_5,
\end{equation}
for any $t\ge T_1$. This finish the proof.
\end{proof}

Furthermore, given initial data $\psi_0$ with higher regularity, we obtain the following finite-time estimate.
\begin{proposition}\label{finite-time}
Suppose that $\psi_0\in V$ and the triple $(\Bu,\psi, p)$ is a solution to the problem \eqref{Sharps}-\eqref{bc}. Then for any $t>0$, it holds that
\begin{equation}\label{H^1-1} 
  \|\nabla \psi(t)\|_{L^2(\Omega)} ^2 \leq M_6(\|\psi_0\|_{L^2(\Omega)},\|\sqrt{D}\nabla\psi_0\|_{L^2(\Omega)})
\end{equation}
and 
\begin{equation}\label{H^1-2}
  \int_0^t \|\mathcal{L} \psi(s)\|_{L^2(\Omega)}^2\dd s \leq M_7(t,\|\psi_0\|_{L^2(\Omega)},\|\sqrt{D}\nabla\psi_0\|_{L^2(\Omega)}),
\end{equation} 
where the constants $M_6$ and $M_7$ are given in \eqref{M6} and \eqref{M7}, respectively.
\end{proposition}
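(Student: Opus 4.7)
The plan is to reuse the differential inequality \eqref{H^1-8} obtained in the proof of Proposition \ref{H^1}, but to exploit the additional regularity $\psi_0 \in V$ (which makes $y(0) := \|\sqrt{D}\nabla\psi_0\|_{L^2(\Omega)}^2$ finite) so that the classical Gr\"onwall inequality can be applied directly from $t = 0$, rather than the uniform Gr\"onwall lemma. Setting $y(t) := \|\sqrt{D}\nabla\psi(t)\|_{L^2(\Omega)}^2$, inequality \eqref{H^1-8} reads
\[
y'(t) + \tfrac{1}{2}\|\mathcal{L}\psi\|_{L^2(\Omega)}^2 \leq M_2 + g(t)\,y(t), \qquad g(t) := M_3 + M_4\,\|\psi(t)\|_{L^2(\Omega)}^2\,y(t),
\]
which is affine-linear in $y$ once $g(t)$ is treated as a prescribed coefficient.

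The delicate point, and the main obstacle, is the Riccati-type quadratic term $M_4 \|\psi\|_{L^2(\Omega)}^2\,y^2$ on the right-hand side, which generically threatens finite-time blow-up of $y$. The trick is to absorb one factor of $y$ into the coefficient $g(t)$ as above, and then invoke the a priori bounds already furnished by Proposition \ref{L^2}: estimate \eqref{L^2-1} gives $\|\psi(t)\|_{L^2(\Omega)}^2 \leq A$ for all $t \geq 0$, where $A$ depends only on $\|\psi_0\|_{L^2(\Omega)}$, and estimate \eqref{L^2-1-1} gives $\int_0^t y(s)\,ds \leq M_1 t + \|\psi_0\|_{L^2(\Omega)}^2$. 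Together these imply $\int_0^t g(s)\,ds \leq M_3 t + M_4 A(M_1 t + \|\psi_0\|_{L^2(\Omega)}^2)$, whence the classical Gr\"onwall inequality yields
\[
y(t) \leq \bigl(y(0) + M_2 t\bigr)\exp\!\bigl(M_3 t + M_4 A(M_1 t + \|\psi_0\|_{L^2(\Omega)}^2)\bigr)
\]
for every $t \geq 0$.

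To turn this into a bound $M_6$ independent of $t$ as stated in \eqref{H^1-1}, I split into two regimes: on the transient interval $[0, T_1 + 1]$ I apply the above Gr\"onwall bound with $t = T_1 + 1$, recalling $T_1 = T_1(\|\psi_0\|_{L^2(\Omega)})$, which produces a constant depending only on $\|\psi_0\|_{L^2(\Omega)}$ and $\|\sqrt{D}\nabla\psi_0\|_{L^2(\Omega)}$; for $t \geq T_1 + 1$, Proposition \ref{H^1} already provides the uniform bound $y(t) \leq M_5$. Taking the maximum of the two yields $M_6$, and comparing $\|\nabla\psi\|_{L^2(\Omega)}^2$ with $\|\sqrt{D}\nabla\psi\|_{L^2(\Omega)}^2$ costs only a factor $(\min_i D_i)^{-1}$. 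Finally, for \eqref{H^1-2}, I integrate \eqref{H^1-8} from $0$ to $t$, drop the nonnegative term $y(t)$, and use the bound $y(s) \leq M_6$ together with $\int_0^t y(s)\,ds \leq M_1 t + \|\psi_0\|_{L^2(\Omega)}^2$ to control every term on the right-hand side; the resulting $M_7$ is linear in $t$ with coefficients depending on $\|\psi_0\|_{L^2(\Omega)}$ and $\|\sqrt{D}\nabla\psi_0\|_{L^2(\Omega)}$, as claimed.
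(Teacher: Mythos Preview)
Your proposal is correct and follows essentially the same approach as the paper's own proof: reuse the differential inequality \eqref{H^1-8}, apply the classical Gr\"onwall inequality from $t=0$ using the $L^2$ bounds \eqref{L^2-1}--\eqref{L^2-1-1} from Proposition~\ref{L^2}, then splice the resulting transient bound on $[0,T_1+1]$ with the asymptotic bound $M_5$ from Proposition~\ref{H^1} to obtain a $t$-independent $M_6$, and finally integrate \eqref{H^1-8} over $[0,t]$ with $y(s)\le M_6$ to get $M_7$. The constants you produce match those in \eqref{M6}--\eqref{M7} up to inessential choices in how the factors are grouped.
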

\begin{proof}
Following the proof of Proposition \ref{H^1}, one obtains also the energy inequality \eqref{H^1-8}. With the aid of \eqref{L^2-1} and \eqref{L^2-1-1}, we could apply the  Gr\"onwall's inequality to \eqref{H^1-8} and deduce
\begin{equation}\nonumber
\begin{aligned}
 &~\|\sqrt{D}\nabla \psi(t)\|_{L^2(\Omega)}^2 \\
 \leq&~ \|\sqrt{D}\nabla \psi_0\|_{L^2(\Omega)}^2\exp\left(\int_0^t (M_3+M_4\|\psi\|_{L^2(\Omega)}^2 \|\sqrt{D}\nabla \psi\|_{L^2(\Omega)}^2) \dd s\right) \\
 &~+ \int_0^t M_2  \exp\left(\int_\tau^t (M_3+M_4\|\psi\|_{L^2(\Omega)}^2 \|\sqrt{D}\nabla \psi\|_{L^2(\Omega)}^2) \dd s\right) \dd \tau\\
 \leq&~ (\|\sqrt{D}\nabla \psi_0\|_{L^2(\Omega)}^2+  M_2 t)\exp\left(M_3 t +M_4\int_0^t \|\psi\|_{L^2(\Omega)}^2 \|\sqrt{D}\nabla \psi\|_{L^2(\Omega)}^2  \dd s\right)\\
 \leq&~ (\|\sqrt{D}\nabla \psi_0\|_{L^2(\Omega)}^2+ M_2 t)\\
 &\quad\cdot\exp\left[M_3 t +M_4\left(\frac{M_1H^2}{\min_i D_i}+\|\psi_0\|_{L^2(\Omega)}^2\right)\left(M_1t + \|\psi_0\|_{L^2(\Omega)}^2\right)\right]\\
 =&:M_5'(t,\|\psi_0\|_{L^2(\Omega)},\|\sqrt{D}\nabla\psi_0\|_{L^2(\Omega)}).
 \end{aligned}
\end{equation} 
This, together with Proposition \ref{H^1}, gives  
\begin{equation}\label{M6}
\begin{aligned}
\|\sqrt{D}\nabla \psi(t)\|_{L^2(\Omega)}^2\leq&~ \max\{M_5,M_5'(T_1+1,  \|\psi_0\|_{L^2(\Omega)},\|\sqrt{D}\nabla\psi_0\|_{L^2(\Omega)})\}  \\
=&:M_6(\|\psi_0\|_{L^2(\Omega)},\|\sqrt{D}\nabla\psi_0\|_{L^2(\Omega)}),
\end{aligned}
\end{equation}
for any $t> 0$.  Finally, we integrate \eqref{H^1-8} over $[0,t]$ to obtain 
\begin{equation}\label{M7}
\begin{aligned}
&~\frac12\int_0^t \|\mathcal{L}\psi\|_{L^2(\Omega)}^2 \dd s  \\
  \leq&~  \int_0^t M_2 + (M_3+M_4\|\psi\|_{L^2(\Omega)}^2 \|\sqrt{D}\nabla \psi\|_{L^2(\Omega)}^2)  \|\sqrt{D}\nabla \psi\|_{L^2(\Omega)}^2\dd s\\
  &~+\|\sqrt{D}\nabla \psi_0\|_{L^2(\Omega)}^2\\
  \leq&~ \left[M_2  + M_3M_6   + M_4 M_6^2   \left(\frac{M_1H^2}{\min_i D_i}+\|\psi_0\|_{L^2(\Omega)}^2\right)\right] t  +\|\sqrt{D}\nabla \psi_0\|_{L^2(\Omega)}^2\\
  =:&~\frac12 M_7(t,\|\psi_0\|_{L^2(\Omega)},\|\sqrt{D}\nabla\psi_0\|_{L^2(\Omega)}).
\end{aligned}
\end{equation} 
Then we finish the proof.
\end{proof}

\section{Convergence} 
In this section, we study the convergence of the solutions of the problem \eqref{Sharps}-\eqref{bc} as  the thicknesses of some layer goes to zero. Our convergence analysis proceeds in two steps: first, with the aid of the finite-time uniform estimates given in Proposition \ref{finite-time}, Theorem \ref{convergence} establishes the convergence on arbitrary time intervals; this, together with the attracting property of the dynamical system,  yields the convergence of the long-time state (see Theorem \ref{thm1}).

For convenience, we denote the thicknesses of the $(j-1)$-th and $j$-th layers by $h-\varepsilon$ and $\varepsilon$, respectively, while the thicknesses of the other layers are assumed to be fixed constants. The corresponding permeability and diffusion coefficient are denoted by $K^\varepsilon$ and $D^\varepsilon$, respectively. In the limit case  $\varepsilon=0$, we have $z_{j-1}=z_{j}$, and then the parameters $K^0, D^0$ are defined in the layered domain with one fewer layer, that is, 
\[ K^0(\Bx) = K_i(z),~D^0(\Bx) = D_i(z),\quad\text{ if }z \in (z_{i-1},z_i) \text{ for } i=1,\cdots,j-1,j+1,\cdots,l.\]

\subsection{Finite-time convergence} For any $\varepsilon$, let $(\Bu^\varepsilon,\psi^\varepsilon,p^\varepsilon)$ be the solution to the sharp interface problem \eqref{Sharps}-\eqref{bc} with thin layer. Denote
\[\tilde{\Bu} = \Bu^0-\Bu^\varepsilon,~\tilde{p} = p^0-p^\varepsilon,~\tilde{\psi} = \psi^0-\psi^\varepsilon,\]
and 
\[\tilde{K} := K^0-K^\varepsilon,\quad\tilde{D} := D^0 - D^\varepsilon.\]
Then $(\tilde{\Bu},\tilde{\psi},\tilde{p})$ satisfies the following system
\begin{equation}\label{error} 
      \left\{\begin{aligned}
        &\tilde{\Bu}=- K ^\varepsilon\left(\nabla \tilde{p } + \tilde{\psi} \Be_{z}\right)- \tilde{K} \left(\nabla p^0   + \psi^0  \Be_{z}\right),\\
        &\nabla \cdot \tilde{\Bu} =0,\\
        & \partial_t \tilde{\psi} + \Bu^\varepsilon\cdot\nabla \tilde{\psi}+\tilde{\Bu}\cdot\nabla \psi^0   +  \phi_b'\tilde{\Bu} \cdot \Be_z   =  \nabla\cdot(D^\varepsilon\nabla \tilde{\psi}) + \nabla\cdot(\tilde{D} \nabla \psi^0 )+ \tilde{D}\phi_b''.
      \end{aligned}\right.  
  \end{equation}   

\begin{theorem}\label{convergence}
Let $\psi_0 = \psi_0^\varepsilon \in V$. Then, as $\varepsilon$ goes to $ 0^+$, the corresponding solution $(\Bu^\varepsilon, \psi^\varepsilon, \nabla p^\varepsilon)$ converges to $(\Bu^0, \psi^0, \nabla p^0)$ in $L^\infty(0,T;L^2(\Omega))$ for any fixed $T$. Furthermore, the difference $(\tilde{\Bu},\tilde{\psi},\nabla \tilde{p})$ satisfies
  \begin{equation}\nonumber
\|\tilde{\Bu}\|_{L^2(\Omega)}^2+ \| \tilde{\psi}\|_{L^2(\Omega)}^2+ \| \nabla\tilde{p}\|_{L^2(\Omega)}^2\leq M_8\varepsilon^\frac14,~~~~\text{for any } t>0,
  \end{equation}
  where the constant $M_8= M_8(t,\|\psi_0\|_{L^2(\Omega)},\|\sqrt{D}\nabla\psi_0\|_{L^2(\Omega)})$ is given in \eqref{M8}. 
\end{theorem}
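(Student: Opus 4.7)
The plan is to derive the estimate by an $L^2$ energy method on the difference system \eqref{error}, exploiting the zero initial condition $\tilde{\psi}(0)=0$ (since $\psi_0^\varepsilon=\psi_0^0$) and the key geometric fact that $\tilde{K}$ and $\tilde{D}$ are supported on a strip of width $\varepsilon$, so that for any $f\in L^4(\Omega)$ one has the H\"older bound
\[
\|\tilde{K}f\|_{L^2(\Omega)}\leq \|\tilde{K}\|_{L^\infty}|\mathrm{strip}|^{1/4}\|f\|_{L^4(\Omega)}\leq C\varepsilon^{1/4}\|f\|_{L^4(\Omega)},
\]
and likewise for $\tilde{D}$; this is the source of the $\varepsilon^{1/4}$ rate. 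The first task is to control the pressure: taking the divergence of \eqref{error}$_1$ and using $\nabla\cdot\tilde{\Bu}=0$ yields the elliptic problem $-\nabla\cdot(K^\varepsilon\nabla\tilde{p})=\nabla\cdot(K^\varepsilon\tilde{\psi}\Be_z)+\nabla\cdot(\tilde{K}(\nabla p^0+\psi^0\Be_z))$, to which an $L^2$ elliptic estimate in the spirit of Lemma \ref{estimate-p}, together with the H\"older inequality above applied to $f=\nabla p^0+\psi^0\Be_z$, Lemma \ref{estimate-p} for $\|\nabla p^0\|_{L^4}$, and Lemma \ref{lemmaA1} for $\|\psi^0\|_{L^4}$, yields
\[
\|\nabla\tilde{p}\|_{L^2}+\|\tilde{\Bu}\|_{L^2}\leq C\|\tilde{\psi}\|_{L^2}+C\varepsilon^{1/4}\mathcal{R}(t),
\]
with $\mathcal{R}(t)$ depending polynomially on the uniform-in-$\varepsilon$ bounds of $\psi^0$ furnished by Propositions \ref{L^2} and \ref{finite-time}.

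Next, I would test \eqref{error}$_3$ with $\tilde{\psi}$. Incompressibility of $\Bu^\varepsilon$ kills the self-transport $(\Bu^\varepsilon\cdot\nabla\tilde{\psi},\tilde{\psi})$, leaving
\[
\tfrac{1}{2}\tfrac{d}{dt}\|\tilde{\psi}\|_{L^2}^2+\|\sqrt{D^\varepsilon}\nabla\tilde{\psi}\|_{L^2}^2 = -(\tilde{\Bu}\cdot\nabla\psi^0,\tilde{\psi}) - (\phi_b'\tilde{\Bu}\cdot\Be_z,\tilde{\psi}) - (\tilde{D}\nabla\psi^0,\nabla\tilde{\psi}) + (\tilde{D}\phi_b'',\tilde{\psi}).
\]
The cross-convection term is rewritten via integration by parts and $\nabla\cdot\tilde{\Bu}=0$ as $(\psi^0\tilde{\Bu},\nabla\tilde{\psi})$, and bounded by $\eta\|\nabla\tilde{\psi}\|_{L^2}^2+C_\eta\|\psi^0\|_{L^\infty}^2\|\tilde{\Bu}\|_{L^2}^2$; the $L^\infty$ bound on $\psi^0$ comes from $W\hookrightarrow L^\infty$ in the planar setting (a consequence of Lemma \ref{W-embedding} and Morrey-type interpolation). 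The background term $(\phi_b'\tilde{\Bu}\cdot\Be_z,\tilde{\psi})$ is estimated exactly as in Proposition \ref{L^2} under the smallness assumption \eqref{delta_2}. Finally, the two thin-layer sources are controlled by the H\"older trick: $\|\tilde{D}\nabla\psi^0\|_{L^2}\leq C\varepsilon^{1/4}\|\nabla\psi^0\|_{L^4}\leq C\varepsilon^{1/4}\|\nabla\psi^0\|_{L^2}^{1/2}\|\psi^0\|_W^{1/2}$ by Lemma \ref{lemmaA1}, and $\|\tilde{D}\phi_b''\|_{L^2}\leq C\varepsilon^{1/4}\|\phi_b''\|_{L^4}$ is a fixed constant once $\delta$ has been chosen.

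Collecting everything and invoking Young's inequality produces an inequality of the form
\[
\tfrac{d}{dt}\|\tilde{\psi}\|_{L^2}^2 + c\|\nabla\tilde{\psi}\|_{L^2}^2 \leq G(t)\|\tilde{\psi}\|_{L^2}^2 + \varepsilon^{1/2}H(t),
\]
where $G,H\in L^1(0,T)$ uniformly in $\varepsilon$, thanks to the estimate $\mathcal{L}\psi^0\in L^2(0,T;L^2(\Omega))$ from Proposition \ref{finite-time} and the equivalence $\|\psi^0\|_W\sim\|\mathcal{L}\psi^0\|_{L^2}$ of Lemma \ref{lemma-equivalence}. Grönwall's inequality applied with $\tilde{\psi}(0)=0$ yields $\|\tilde{\psi}(t)\|_{L^2}^2\leq C(T)\varepsilon^{1/2}\leq M_8\varepsilon^{1/4}$ on $[0,T]$, and the Step~1 bounds transfer the same rate to $\|\tilde{\Bu}\|_{L^2}^2$ and $\|\nabla\tilde{p}\|_{L^2}^2$, completing the proof with $M_8=M_8(T,\|\psi_0\|_{L^2},\|\sqrt{D}\nabla\psi_0\|_{L^2})$. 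The main obstacle will be the cross-convection $(\tilde{\Bu}\cdot\nabla\psi^0,\tilde{\psi})$: the velocity $\tilde{\Bu}$ itself carries a thin-layer source that is coupled back to $\tilde{\psi}$ through the nonlocal pressure, so closing the loop requires the $W$-regularity of $\psi^0$ from Proposition \ref{finite-time} to keep $G(t)$ both time-integrable and independent of $\varepsilon$.
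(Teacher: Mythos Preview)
Your proposal is correct and follows the same overall architecture as the paper's proof: an elliptic estimate on $\tilde{p}$ to control $\tilde{\Bu}$ in terms of $\tilde{\psi}$ plus an $\varepsilon$-small remainder, an $L^2$ energy estimate on \eqref{error}$_3$, and Gr\"onwall with $\tilde{\psi}(0)=0$.

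The one genuine difference is your treatment of the cross-convection term. The paper writes it as $(\tilde{\Bu}\cdot\nabla\tilde{\psi},\psi^0)$ and splits via $\|\nabla\tilde{\psi}\|_{L^2}\|\psi^0\|_{L^4}\|\tilde{\Bu}\|_{L^4}$, which forces an $L^4$ elliptic estimate on $\tilde{p}$ and brings in $\|\tilde{K}\|_{L^8}\sim\varepsilon^{1/8}$; after squaring, this is precisely the source of the $\varepsilon^{1/4}$ rate in the statement. Your $L^\infty$--$L^2$--$L^2$ split via $W\hookrightarrow L^\infty$ needs only $\|\tilde{\Bu}\|_{L^2}$, hence only $\|\tilde{K}\|_{L^4}\sim\varepsilon^{1/4}$, and in fact delivers the sharper bound $\|\tilde{\psi}\|_{L^2}^2\leq C(T)\varepsilon^{1/2}$ (which you then relax to $\varepsilon^{1/4}$). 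The trade-off is that your Gr\"onwall coefficient $G(t)=C\|\psi^0(t)\|_{L^\infty}^2\leq C\|\mathcal{L}\psi^0(t)\|_{L^2}^2$ is only $L^1$ in time via Proposition~\ref{finite-time}, whereas the paper's $G=CM_6^2$ is a constant. One minor caveat: your plan to handle $(\phi_b'\tilde{\Bu}\cdot\Be_z,\tilde{\psi})$ ``exactly as in Proposition~\ref{L^2}'' is more delicate than it sounds, since the Poincar\'e--incompressibility trick there requires controlling $\partial_x\tilde{\Bu}$, and the $\tilde{K}$-source in the elliptic problem for $\tilde{p}$ contributes an extra term; the paper instead simply bounds this term by $c_\Delta\delta^{-1}\|\tilde{\Bu}\|_{L^2}\|\tilde{\psi}\|_{L^2}$ and absorbs it into $G$, which is quicker and suffices here.
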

\begin{proof}
First, we note that the difference $\tilde{K},\tilde{D}$ are piecewise constant functions supported in a narrow band of width $\varepsilon$. Thus,
\begin{equation}\label{convergence-1}
\|\tilde{K}\|_{L^r(\Omega)} \leq |K_{j-1}-K_j| \varepsilon^{\frac{1}{r}}, \quad \|\tilde{D}\|_{L^r(\Omega)} \leq |D_{j-1}-D_j| \varepsilon^{\frac{1}{r}},
\end{equation}
for any $r\in [1,\infty]$.

The difference $\tilde{p}$ of pressure is determined by the elliptic problem
  \begin{equation}\label{error-pressure}
  \left\{\begin{aligned}
&-\nabla\cdot(K^\varepsilon\nabla {\tilde{p}}) = \nabla\cdot (K^\varepsilon\tilde{\psi}\Be_{z} +\tilde{K}(\nabla p^0 +\psi^0  \Be_z)) , ~~~~ \text{ in } \Omega,\\
&\partial_z \tilde{p}(x,-H)  = \partial_z \tilde{p}(x,0)  = 0.
  \end{aligned}\right. 
\end{equation}
Then it follows from Lemma \ref{estimate-p} and \eqref{convergence-1} that 
\begin{equation}\nonumber
\begin{aligned}
\|\nabla \tilde{p}\|_{L^4(\Omega)} \leq &~C (\|K^\varepsilon\tilde{\psi}\Be_{z} \|_{L^4(\Omega)}+\|\tilde{K}\nabla p^0\|_{L^4}+\|\tilde{K} \psi^0\|_{L^4(\Omega)})\\
\leq&~C(\|K^\varepsilon\|_{L^\infty(\Omega)}\|  \tilde{\psi}\|_{L^4(\Omega)}+\|\tilde{K}\|_{L^8(\Omega)}(\|\nabla p^0 \|_{L^8(\Omega)}+\|\psi^0 \|_{L^8(\Omega)}))\\
\leq&~C \|\tilde{\psi}\|_{L^4(\Omega)}+ C \varepsilon^\frac18 \|\psi^0 \|_{L^8(\Omega)}.
\end{aligned}
\end{equation}
According to Sobolev imbedding inequality and Proposition \ref{finite-time}, one has
\begin{equation}\nonumber
\|\nabla \tilde{p}\|_{L^4(\Omega)} 
\leq ~C \|\tilde{\psi}\|_{L^4(\Omega)}+ C \varepsilon^\frac18 \|\nabla\psi^0\|_{L^2(\Omega)} \leq C \|\tilde{\psi}\|_{L^4(\Omega)}+ CM_6^\frac12 \varepsilon^\frac18.
\end{equation}
Consequently, 
\begin{equation}\label{convergence-2}
\begin{aligned}
\|\tilde{u}\|_{L^4(\Omega)} \leq&~ \|K ^\varepsilon \|_{L^\infty(\Omega)}(\|\nabla \tilde{p }\|_{L^4(\Omega)}+ \|\tilde{\psi}\|_{L^4(\Omega)} ) \\
&~+ \|\tilde{K}\|_{L^8(\Omega)} (\|\nabla p^0 \|_{L^8(\Omega)}  + \|\psi^0 \|_{L^8(\Omega)})\\
\leq&~C (\| \tilde{\psi}\|_{L^4(\Omega)}+ M_6^\frac12  \varepsilon^\frac18  )  + C \varepsilon^\frac18 \|\psi^0 \|_{L^8(\Omega)}\\
\leq&~C (\| \tilde{\psi}\|_{L^4(\Omega)}+ M_6^\frac12  \varepsilon^\frac18  )  + C \varepsilon^\frac18 \|\nabla\psi^0 \|_{L^2(\Omega)}\\
\leq&~C (\| \tilde{\psi}\|_{L^4(\Omega)}+ M_6^\frac12  \varepsilon^\frac18  ).
\end{aligned}
\end{equation}
In a similar way, one can also prove 
\begin{equation}\label{convergence-3-1}
\|\nabla\tilde{p}\|_{L^2(\Omega)} \leq C (\| \tilde{\psi}\|_{L^2(\Omega)}+ M_6^\frac12  \varepsilon^\frac14 ).
\end{equation}
and
\begin{equation}\label{convergence-3}
\|\tilde{u}\|_{L^2(\Omega)} \leq C (\| \tilde{\psi}\|_{L^2(\Omega)}+ M_6^\frac12  \varepsilon^\frac14 ).
\end{equation}

Next, testing \eqref{error}$_3$ by $ \tilde{\psi}$ and using integration by parts, we obtain
\begin{equation}\label{convergence-8}
\begin{aligned}
&~\frac12\frac{\dd }{\dd t}\|\tilde{\psi}\|_{L^2(\Omega)}^2  + \|\sqrt{D^\varepsilon} \nabla \tilde{\psi}\|_{L^2(\Omega)}^2 \\
= &~(\tilde{\Bu}\cdot\nabla   \tilde{\psi}, \psi^0 ) -  (\phi_b'\tilde{\Bu} \cdot \Be_z, \tilde{\psi}) - (\tilde{D} \nabla \psi^0 ,\nabla\tilde{\psi}) +  (\tilde{D}\phi_b'', \tilde{\psi}).
\end{aligned}
\end{equation}

With the aid of \eqref{convergence-2}-\eqref{convergence-3}, one applies Sobolev embedding inequality and Lemma \ref{lemmaA1} to obtain
\begin{equation}\label{convergence-4}
\begin{aligned}
|(  \tilde{\Bu}\cdot\nabla  \tilde{\psi}  ,\psi^0)| \leq &~ \|\nabla  \tilde{\psi}\|_{L^2(\Omega)}\|\psi^0\|_{L^4(\Omega)}  \|\tilde{\Bu}\|_{L^4(\Omega)} \\ 
\leq&~\frac{\min_i D_i}{4} \|\nabla  \tilde{\psi}\|_{L^2(\Omega)}^2 +  C \|\psi^0\|_{L^4(\Omega)}^2  \|\tilde{\Bu}\|_{L^4(\Omega)}^2\\
\leq&~\frac{\min_i D_i}{4} \|\nabla  \tilde{\psi}\|_{L^2(\Omega)}^2 +  C \|\nabla\psi^0\|_{L^2(\Omega)}^2   (\| \tilde{\psi}\|_{L^4(\Omega)}^2+ M_6  \varepsilon^\frac14  )\\
\leq&~\frac{\min_i D_i}{4} \|\nabla  \tilde{\psi}\|_{L^2(\Omega)}^2 +  C M_6 (\| \tilde{\psi}\|_{L^4(\Omega)}^2+ M_6  \varepsilon^\frac14  )\\
\leq&~\frac{\min_i D_i}{4} \|\nabla  \tilde{\psi}\|_{L^2(\Omega)}^2 +  C M_6 \| \tilde{\psi}\|_{L^2(\Omega)}\| \nabla\tilde{\psi}\|_{L^2(\Omega)} + CM_6^2  \varepsilon^\frac14  \\
\leq&~\frac{\min_i D_i}{2} \|\nabla  \tilde{\psi}\|_{L^2(\Omega)}^2 +  C M_6^2 \| \tilde{\psi}\|_{L^2(\Omega)}^2 + CM_6^2  \varepsilon^\frac14,
\end{aligned}
\end{equation}
\begin{equation}\label{convergence-5}
\begin{aligned}
|( \phi_b'\tilde{\Bu} \cdot \Be_z, \tilde{\psi})| =&~ \left|\int_\Omega   \phi_b' \tilde{\Bu} \cdot \Be_z  \tilde{\psi}\dd \Bx \right| \leq~ c_\Delta\delta^{-1} \|\tilde{\Bu}\|_{L^2(\Omega)}\|\tilde{\psi}\|_{L^2(\Omega)}\\
\leq&~C(\| \tilde{\psi}\|_{L^2(\Omega)}+ M_6^\frac12  \varepsilon^\frac14)\| \tilde{\psi}\|_{L^2(\Omega)}\\
\leq&~C \| \tilde{\psi}\|_{L^2(\Omega)}^2+ CM_6  \varepsilon^\frac12
\end{aligned}
\end{equation}
and
\begin{equation}\label{convergence-6}
\begin{aligned}
|(\tilde{D}\phi_b'', \tilde{\psi})| \leq& ~ 2c_\Delta\delta^{-2} \|\tilde{D}\|_{L^2(\Omega)}\|\tilde{\psi}\|_{L^2(\Omega)} \leq  C\varepsilon^\frac12\|\tilde{\psi}\|_{L^2(\Omega)}\\
\leq&~C \| \tilde{\psi}\|_{L^2(\Omega)}^2+ C\varepsilon.
\end{aligned}
\end{equation}

Finally, by virtue of Lemma \ref{W-embedding} and \ref{lemmaA1}, one has
\begin{equation}\label{convergence-7}
\begin{aligned}
|( \tilde{D} \nabla \psi^0, \nabla\tilde{\psi} )| 
\leq&~\frac{\min_iD_i}{4} \|\nabla  \tilde{\psi}\|_{L^2(\Omega)}^2 + C \|\tilde{D}\|_{L^4(\Omega)}^2\|\nabla\psi^0\|_{L^4(\Omega)}^2\\
\leq&~\frac{\min_iD_i}{4} \|\nabla  \tilde{\psi}\|_{L^2(\Omega)}^2 + C \varepsilon^\frac{1}{2}  \|\psi^0\|_{W}^2\\
\leq&~\frac{\min_iD_i}{4} \|\nabla  \tilde{\psi}\|_{L^2(\Omega)}^2 + C \varepsilon^\frac{1}{2}  \|\mathcal{L}\psi^0\|_{L^2(\Omega)}^2.
\end{aligned}
\end{equation}
Substituting \eqref{convergence-4}-\eqref{convergence-7} to \eqref{convergence-8} gives 
\begin{equation} \nonumber
\begin{aligned}
&~\frac{1}{2}\frac{\dd }{\dd t} \|\tilde{\psi}\|_{L^2(\Omega)}^2 + \frac{\min_i D_i}{4}\|\nabla \tilde{\psi}\|_{L^2(\Omega)}^2 \\
\leq&~ CM_6^2\|\tilde{\psi}\|_{L^2(\Omega)}^2+ C\varepsilon+ CM_6^2\varepsilon^\frac14+C\varepsilon^\frac12\|\mathcal{L}\psi^0\|_{L^2(\Omega)}^2 .
\end{aligned}
\end{equation}
Since $\tilde{\psi}(0)=0$, one utilizes  Gr\"onwall's inequality and Proposition \ref{H^1} to obtain
\begin{equation}\label{M8}
\begin{aligned}
\|\tilde{\psi}(t)\|_{L^2(\Omega)}^2 \leq&~ \int_0^t (C\varepsilon+ CM_6^2\varepsilon^\frac14+ C\|\mathcal{L}\psi^0(\tau )\|_{L^2(\Omega)}^2 \varepsilon^\frac12) e^{\int_\tau ^t CM_6^2\dd s} \dd \tau\\
\leq&~  e^{CM_6^2t} \left(Ct\varepsilon +  CM_6^2 t \varepsilon^\frac14+ C\varepsilon^\frac12\int_0^t  \|\mathcal{L}\psi^0(\tau )\|_{L^2(\Omega)}^2  \dd \tau\right)\\
\leq&~e^{CM_6^2t} \left( Ct  + CM_6^2 t  + CM_7 \right)\varepsilon^\frac14.
\end{aligned}
\end{equation}
This, together with \eqref{convergence-3-1} and \eqref{convergence-3}, gives 
\begin{equation}
\begin{aligned}
&~\|\tilde{\Bu}\|_{L^2(\Omega)}^2+ \| \tilde{\psi}\|_{L^2(\Omega)}^2+ \| \nabla\tilde{p}\|_{L^2(\Omega)}^2\\
\leq &~ C (\varepsilon^\frac14 e^{CM_6^2t} \left( Ct  + CM_6^2 t  + CM_7\right) + \varepsilon^\frac14  M_6 )\\
=&:\varepsilon^\frac14 M_8.
\end{aligned}
\end{equation}
Then we finish the proof of this theorem.
\end{proof}

\subsection{Convergence of attractors}
Now we investigate the long-time behavior of the sharp interface model with one thin layer. The proof follows a standard argument, combining the attracting property of the global attractor of the limit model with the uniform convergence (with respect to initial data taken from the global attractor) of trajectories on finite time intervals, as established in the preceding subsection.

We first define the semigroup $S_\varepsilon(t)$ associated with problem \eqref{Sharps}-\eqref{bc}. For any $\varepsilon>0$ and $t$,  set 
\[S_\varepsilon (t):\psi_0\mapsto \psi^\varepsilon(t).\]
By Proposition \ref{prop1}, $S_\varepsilon(t)$ is a continuous operator from $H$ to itself, and is continuous in $t$. Consequently, we arrive at the following theorem on the existence and convergence of the attractors corresponding to $\{S_\varepsilon(t)\}$.

\begin{theorem}\label{thm1}
  For any $\varepsilon\ge 0$, there exists a global attractor $\mathcal{A}_\varepsilon \subset H$ for the semigroup $\{S_\varepsilon(t)\}$ associated with the problem \eqref{Sharps}-\eqref{bc}, which is uniformly bounded in $V$ in $\varepsilon$, compact and connected in $H$. Furthermore, as $\varepsilon$ goes to $0^+$, the attractor $\mathcal{A}_\varepsilon$ converges to $\mathcal{A}_0$ in the following sense,
\begin{equation}\nonumber
d(\mathcal{A}_{\varepsilon}, \mathcal{A}_0) \to  0,
\end{equation}
where $d(\mathcal{A}_{\varepsilon}, \mathcal{A}_0)$ is the Hausdorff semi-distance between $\mathcal{A}_{\varepsilon}, \mathcal{A}_0$ defined by 
\begin{equation}\nonumber
d(\mathcal{A}_{\varepsilon}, \mathcal{A}_0) := \sup_{f\in \mathcal{A}_{\varepsilon}} \operatorname{dist} (f,\mathcal{A}_0)= \sup_{f\in \mathcal{A}_{\varepsilon}} \inf_{g\in \mathcal{A}_0}\|f-g\|_{L^2(\Omega)}.
\end{equation}
\end{theorem}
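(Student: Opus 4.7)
The plan is to establish the theorem in three stages: existence of $\mathcal{A}_\varepsilon$ for each $\varepsilon\ge 0$, uniform boundedness in $V$, and upper semi-continuity at $\varepsilon=0$.

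For existence, I would apply the standard abstract theorem for dissipative semigroups (as in Temam's book). Continuity of $S_\varepsilon(t)$ on $H$ and in $t$ is already provided by Proposition \ref{prop1}. From Proposition \ref{L^2}, the ball $B_H$ in $H$ of radius $(\frac{M_1 H^2}{\min_i D_i}+1)^{1/2}$ is absorbing for $\{S_\varepsilon(t)\}$, with entry time $T_1$ depending only on the $H$-norm of the initial data. From Proposition \ref{H^1}, the ball $B_V$ in $V$ of radius $M_5^{1/2}/(\min_i\sqrt{D_i})$ is absorbing at time $T_1+1$; since $V\hookrightarrow H$ compactly, this yields asymptotic compactness. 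The global attractor $\mathcal{A}_\varepsilon=\bigcap_{s\ge 0}\overline{\bigcup_{t\ge s}S_\varepsilon(t)B_H}^{\,H}$ then exists, is compact and connected in $H$, and is invariant under $S_\varepsilon(t)$. Crucially, inspecting $M_1,\dots,M_5$ one sees that they are expressed purely in terms of $\max_i D_i$, $\min_i D_i$, $\max_i K_i$, $\min_i K_i$, $\delta$, $c_\Delta$, $L$, $H$, and $\Omega$, none of which depend on $\varepsilon$, because shrinking the $j$-th layer does not change the coefficient ranges. Consequently $\mathcal{A}_\varepsilon\subset B_V$ uniformly in $\varepsilon$, establishing the uniform-in-$\varepsilon$ $V$-boundedness.

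For upper semi-continuity, I would follow the classical pull-back argument. Fix $\eta>0$. Since $\mathcal{A}_0$ attracts every bounded set of $H$ in the $H$-topology and $B_V$ is bounded in $H$, there exists $T=T(\eta)>0$ such that $S_0(T)B_V\subset \mathcal{N}_{\eta/2}(\mathcal{A}_0)$, where $\mathcal{N}_r$ denotes the open $r$-neighborhood in $L^2(\Omega)$. Now let $\psi^\varepsilon\in\mathcal{A}_\varepsilon$ be arbitrary. By invariance, $\psi^\varepsilon=S_\varepsilon(T)\psi_0^\varepsilon$ for some $\psi_0^\varepsilon\in\mathcal{A}_\varepsilon\subset B_V$. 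Applying Theorem \ref{convergence} with initial datum $\psi_0^\varepsilon$ (whose $H$- and $V$-norms are uniformly bounded by the uniform bound on $B_V$), one obtains
\begin{equation}\nonumber
\|\psi^\varepsilon - S_0(T)\psi_0^\varepsilon\|_{L^2(\Omega)}^2 \le M_8(T,\cdot,\cdot)\,\varepsilon^{1/4},
\end{equation}
where $M_8$ is uniform over $\psi_0^\varepsilon\in B_V$. Hence there exists $\varepsilon_0=\varepsilon_0(\eta)>0$ such that for all $\varepsilon\in(0,\varepsilon_0)$, $\|\psi^\varepsilon-S_0(T)\psi_0^\varepsilon\|_{L^2(\Omega)}<\eta/2$. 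Combining with $S_0(T)\psi_0^\varepsilon\in \mathcal{N}_{\eta/2}(\mathcal{A}_0)$ gives $\psi^\varepsilon\in\mathcal{N}_\eta(\mathcal{A}_0)$; taking the supremum over $\psi^\varepsilon\in\mathcal{A}_\varepsilon$ yields $d(\mathcal{A}_\varepsilon,\mathcal{A}_0)\le \eta$, completing the proof.

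The main technical obstacle is not conceptual but bookkeeping: one must verify that \emph{every} constant $M_1,\ldots,M_5$ entering the absorbing-ball radii is genuinely independent of $\varepsilon$. The potentially dangerous constants are $C_1$, $C_2$, $C_u$, $C_p$ inherited from the Gagliardo--Nirenberg/embedding (Lemma \ref{lemmaA1}), the $W$-norm equivalence (Lemma \ref{lemma-equivalence}), and the elliptic estimate (Lemma \ref{estimate-p}). Each is stated in the excerpt as depending only on $\Omega$ and the values $\{K_i,D_i\}$, not on the layer thicknesses, so the uniformity passes through; nevertheless I would give a short paragraph to audit this explicitly before invoking the abstract attractor theorem and the pull-back argument above.
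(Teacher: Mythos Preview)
Your proposal is correct and follows essentially the same route as the paper: existence of $\mathcal{A}_\varepsilon$ via the uniform absorbing set in $V$ furnished by Propositions \ref{L^2}--\ref{H^1} together with the compact embedding $V\hookrightarrow H$, and upper semi-continuity via invariance of $\mathcal{A}_\varepsilon$, the attracting property of $\mathcal{A}_0$, and the finite-time convergence bound from Theorem \ref{convergence}. The only cosmetic difference is that the paper phrases the convergence step as a proof by contradiction whereas you give the direct $\eta$-$\varepsilon_0$ version; your explicit audit of the $\varepsilon$-independence of $C_1,C_2,C_u,C_p$ is a point the paper takes for granted.
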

\begin{proof}
Proposition \ref{L^2}, together with Proposition \ref{H^1}, implies that 
\begin{equation}\nonumber
  B_1 =\left\{\psi\in H:~ \|\psi\|_{L^2(\Omega)}^2 < \frac{M_1H^2}{\min_i D_i}+1 ,~\|\sqrt{D} \nabla\psi\|_{L^2(\Omega)}^2 \leq M_5\right\}
\end{equation}
is an absorbing set for the semigroup $\{S_\varepsilon(t)\}$ for any $\varepsilon\ge 0$. More precisely, if $\psi_0^\varepsilon \in B(0, R) \subset H$, then $\psi^\varepsilon(t) = S_\varepsilon(t) \psi_0^\varepsilon$ enters the absorbing set $B_1$ for some $t \leq T_1(R) + 1$ and stays in it for all $t \geq T_1(R) + 1$, where $T_1(R) = \frac{2H^2}{\min_i D_i}\ln R$. Furthermore, the absorbing set $B_1$ is bounded in $V$ and hence, relatively compact in $H$. This ensures the uniformly compactness of $S_\varepsilon(t)$ for large $t$. Then, according to \cite[Theorem 1.1]{Temam1997}, the semigroup $\{S_\varepsilon(t)\}$ associated with problem \eqref{Sharps}-\eqref{bc} possesses a global attractor $\mathcal{A}_\varepsilon$, which is defined as the $\omega$-limits of $B_1$: 
\begin{equation}\nonumber
\mathcal{A}_\varepsilon = \bigcap_{s \geq 0} \overline{\bigcup_{t \geq s} S_\varepsilon(t) B_1}.
\end{equation}
In particular, $\mathcal{A}_\varepsilon$ is  compact and connected in $H$, as well as bounded in $V$.

To show the convergence, we assume for contradiction that the assertion is false. Then there exists an constant $c >0$ and a sequence $\{\varepsilon_k\}$ converging to $0^+$ such that
\[ d(\mathcal{A}_{\varepsilon_k},\mathcal{A}_0) \geq c ,~~~~\text{ for all } k \geq 1.
\]
Since the global attractor $\mathcal{A}_{\varepsilon_k}$ are compact in $H$, there exists an $a_k \in \mathcal{A}_{\varepsilon_k}$ such that
\[\operatorname{dist} (a_k,\mathcal{A}_0) =d(\mathcal{A}_{\varepsilon_k},\mathcal{A}_0) \geq c.\]
On the other hand, since $\mathcal{A}_0$ absorbs all the bounded sets in $H$, we choose $T > 0$ to be sufficiently large such that
\[d(S_0(T)B_1 ,\mathcal{A}_0)<\frac{c}{4}.\]
Since the set $\mathcal{A}_\varepsilon$ is invariant with respect to $S_\varepsilon(t)$ for any $t\ge 0$, i.e., $S_\varepsilon (t) \mathcal{A}_\varepsilon= \mathcal{A}_\varepsilon$, then for each $a_k \in \mathcal{A}_{\varepsilon_k}$, there exists $b_k \in \mathcal{A}_{\varepsilon_k}$ such that $S_{\varepsilon_k}(T)b_k= a_k$ for all $k$. This implies that $b_k\in \mathcal{A}_{\varepsilon_k} \subset B_1$ for all $k$. By Proposition \ref{finite-time}, we have
\begin{equation}\nonumber
\begin{aligned}
  \|a_k -S_0(T)b_k\|_{L^2(\Omega)} \leq&~ \|S_{\varepsilon_k}(T)b_k-S_0(T)b_k\|_{L^2(\Omega)}\\
  \leq&~ \varepsilon_k^\frac14 M_8\left(T,\sqrt{ \frac{M_1H^2}{\min_i D_i}+1},\sqrt{M_5}\right)\\
  \leq&~\frac{c}{4},
\end{aligned}
\end{equation}
provided $k$ is sufficiently large. Hence,
\begin{equation}\nonumber
\begin{aligned}
  \operatorname{dist}(a_k, \mathcal{A}_0) \leq &~\|a_k -S_0(T)b_k\|_{L^2(\Omega)} + \operatorname{dist}(S_0(T)b_k, \mathcal{A}_0)\\
  \leq&~  \|a_k -S_0(T)b_k\|_{L^2(\Omega)} + d(S_0(T)B_1, \mathcal{A}_0) \\
  \leq&~ \frac{c}{2}.
\end{aligned}
\end{equation}
This contradiction completes the proof of this theorem.
\end{proof}

\section{Conclusion}

In this work we have shown that when the thickness of a layer in a multilayer porous medium becomes sufficiently small, the thin layer can be neglected in convection problems. More precisely, we established convergence in the $L^{2}$ sense over arbitrary finite time intervals, provided the initial data lies in $H^{1}$, and we further demonstrated convergence of the global attractors with $L^{2}$ as the phase space. The case of a curved thin layer can be handled analogously by employing curvilinear coordinates, under the assumption that the layers remain well separated except at the thin layer under consideration, and that the thinness is sufficiently uniform to justify the coordinate transformation. The case of smooth variable tensor valued diffusivity and permeability can be handled similarly provided they are sufficiently smooth and uniformly elliptic.

A natural question that arises is whether convergence holds in the stronger $H^{1}$ norm. This is particularly relevant since predictions in applications are sometimes based on $H^{1}$ estimates of the simplified model. While weak convergence in $H^{1}$ follows directly, establishing strong convergence appears more delicate due to the difference in the domains of the principal linear operators in the full and limit problems, even when the initial data is in $H^{1}$. In addition, the permeability and diffusivity may be nonlinear functions of the state variable $\phi$. Proving the convergence in this case would be a challenge in general.

Our analysis has focused on the vanishing-thickness limit of a single layer. An even more intriguing and challenging direction is the simultaneous limit in which both the thickness and permeability of a layer vanish, a possible scenario in several geophysical applications. The resulting reduced model is expected, at least heuristically, to exhibit a Robin-type interfacial condition reminiscent of the imperfect bounding interface conditions studied in composite materials~\cite{AV2018}, but with the added complexity of nonlinear coupling.  

These questions, along with related extensions, remain open and deserve further study.  

\bibliographystyle{amsalpha}
\bibliography{refs}

\end{document}